\numberwithin{equation}{section}
\tikzset{
  curarrow/.style={
  rounded corners=8pt,
  execute at begin to={every node/.style={fill=red}},
    to path={-- ([xshift=-50pt]\tikztostart.center)
    |- (#1) node[fill=white] {$\scriptstyle \delta$}
    -| ([xshift=50pt]\tikztotarget.center)
    -- (\tikztotarget)}\
    }
}
\newtheorem{theorem}{Theorem}[section]
\newtheorem{proposition}[theorem]{Proposition}
\theoremstyle{definition}
\newtheorem{remark}[theorem]{Remark}
\def\opn#1#2{\def#1{\operatorname{#2}}} 
\opn\Cl{Cl} \opn\pdim{pdim} \opn\Im{Im} \opn\Ker{Ker} \opn\ini{in} \opn\typ{type}
\DeclareOldFontCommand{\rm}{\normalfont\rmfamily}{\mathrm}
\begin{document}

\title {Combinatorial Identities Using the Matrix Tree Theorem}
\author{Nayana Shibu Deepthi}
\address{Department of Mathematical Sciences, IISER Mohali, Knowledge City, Sector 81, SAS Nagar, Punjab, 140-306, India}
\email{nayanashibu@iisermohali.ac.in}
\author{Chanchal Kumar}
\address{Department of Mathematical Sciences, IISER Mohali, Knowledge City, Sector 81, SAS Nagar, Punjab, 140-306, India}
\email{chanchal@iisermohali.ac.in}

\keywords{Matrix tree theorem, complete graphs, complete bipartite graphs, spanning tree enumeration}
\subjclass[2020]{05C30, 05C50, 05C05}

\begin{abstract}
In this paper, we explore some interesting applications of the matrix tree theorem.
In particular, we present a combinatorial interpretation of a distribution of $(n-1)^{n-1}$, in the context of uprooted spanning trees of the complete graph $K_{n}$, which was previously obtained by Chauve--Dulucq--Guibert.
Additionally, we establish a combinatorial explanation for the distribution of $m^{n-1}n^{m-1}$, related to spanning trees of the complete bipartite graph $K_{m,n}$, which seems new.
Furthermore, we extend this study to the graph $K_{n}\setminus \{e_{1,n}\}$, obtained by deleting an edge from $K_n$, and derive a new identity for the number of its uprooted spanning trees.
\end{abstract}

\maketitle


\section{Introduction and preliminaries}\label{sec:intro}

Throughout this work, we deal with finite simple labeled graphs, where each vertex has a unique label and there are no multiple edges or loops.
Counting spanning trees is a significant problem in combinatorics, with numerous applications in networks and other areas.
The matrix tree theorem, initially formulated by Kirchhoff \cite{Kirchhoff}, states that the number of spanning trees of a graph can be determined using the determinant of a modified version of its Laplacian matrix -- a matrix encoding the structure of the graph.
A recent elementary proof of the matrix tree theorem can be found in \cite{AlgComb}.
Further, much work has been performed toward understanding the extension of the matrix tree theorem to directed graphs, weighted graphs, and so on. (For examples, see \cites{Tutte,Chaiken,Patrick}.)
In this paper, we explore interesting applications of this theorem in enumerating uprooted spanning trees in complete graphs $K_{n}$ and complete bipartite graphs $K_{m,n}$.

Trees were first extensively studied by Cayley \cite{Cayley}, who established the well-known formula $n^{n-2}$ for the number of labeled trees with $n$ vertices. 
Since every spanning tree of $K_{n}$ is a labeled tree with $n$ vertices, the same formula gives the number of spanning trees of the complete graph $K_{n}$.
The number of uprooted spanning trees of a complete graph $K_{n}$ with $n$ vertices is $(n-1)^{n-1}$.
Chauve--Dulucq--Guibert \cite{CDG} provided combinatorial interpretations of various identities involving $(n-1)^{n-1}$.
The number of spanning trees of a complete bipartite graph $K_{m,n}$ is given by $m^{n-1}n^{m-1}$ (See, \cites{Austin, Abu, JinLiu}).
For modern expositions, these formulas can also be found in textbooks such as \cites{enumerative, Harary}.

Through our exploration, we derived two key identities related to the enumeration of spanning trees.
The first identity pertains to the complete graph $K_{n}$, and although it has been previously obtained in \cite{CDG}, we present an alternative approach to derive the same result. 
The second identity, which arises in the context of complete bipartite graphs $K_{m,n}$, appears to be new. 
Specifically, we provide a combinatorial interpretation of an identity involving $m^{n-1}n^{m-1}$, which represents the number of spanning trees in $K_{m,n}$.


Now, we provide the basic definitions and notation necessary to understand our results.
For any undefined terms and notations in this article, see \cites{Aigner, Bapat}.

Throughout this paper, unless stated otherwise, we consider the finite simple graph $G = (V(G), E(G))$, with the vertex set $V(G)=[n]=\{1,2,\dots n\}$.
For any $v\in V(G)$, the number of edges adjacent to vertex $v$ is called the {\it degree} of $v$, denoted by $\deg_{G}(v)$. 
A graph $G=(V(G),E(G))$ is completely determined by its {\it adjacency matrix} $A(G) = [a_{ij}]_{1\leq i,j\leq n}$, where $a_{ij}$ denotes the number of edges from $i$ to $j$.
We assume $G$ to be loopless, therefore $a_{ii}=0$ for all $i\in V(G)$. 
Note that $G$ is simple if $a_{ij}=0$ or $1$ for all $i,j\in V(G)$.
For a vertex $v\in V(G)$, we denote by $G\backslash v$ the graph obtained from $G$ by deleting the vertex $v$ along with all edges incident to it. 
Similarly, for a subset $E^{\prime}\subseteq E(G)$, we denote by $G\backslash E^{\prime}$ the graph obtained from $G$ by deleting the edges $e\in E^{\prime}$ while keeping all vertices intact.

For a graph $G$ with $V(G)=[n]$, the {\it degree matrix} of $G$ is defined as the diagonal matrix $D(G)={\rm diag}[\deg_{G}(1) ,\deg_{G}(2), \dots , \deg_{G}(n)]$.
The matrix $L(G) = D(G) - A(G)$ is called the {\it Laplacian matrix} of $G$.
Upon deleting any $i$-th row and column of the Laplacian matrix $L(G)$, we obtain the {\it reduced} Laplacian matrix of $G$, denoted by $\widetilde{L}(G)$.
Note that $L(G)$ is singular, and every $(i,j)$-th cofactor of $L(G)$ is equal.
Let ${\rm SPT}(G)$ denote the set of all spanning trees of the graph $G$.
Then, the cardinality of ${\rm SPT}(G)$ is given by the following theorem.

\begin{theorem}[The matrix tree theorem {\cite{Kirchhoff}}]\label{Thm:MTT}
Let $G$ be a simple connected graph with the vertex set $[n]$. Then the number of spanning trees of $G$ is given by 
\[
|{\rm SPT}(G)| = \det(\widetilde{L}(G)).
\]
\end{theorem}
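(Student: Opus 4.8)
The plan is to prove the theorem by the classical route through the incidence matrix and the Cauchy--Binet formula. First I would fix an arbitrary orientation of each edge of $G$, turning it into a directed graph, and form the vertex-edge incidence matrix $M = [m_{v,e}]$ of size $n \times |E(G)|$, where $m_{v,e} = 1$ if $v$ is the head of $e$, $m_{v,e} = -1$ if $v$ is the tail of $e$, and $m_{v,e} = 0$ otherwise. A direct computation of the $(u,v)$-entry of $M M^{T}$ shows it equals $\deg_{G}(v)$ when $u = v$ (each incident edge contributes $(\pm 1)^{2}$) and $-a_{uv}$ when $u \neq v$ (an edge joining $u$ and $v$ contributes $(+1)(-1)$), so that $M M^{T} = D(G) - A(G) = L(G)$. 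Deleting the $i$-th row of $M$ to obtain the $(n-1) \times |E(G)|$ matrix $M_{i}$, the same computation gives $M_{i} M_{i}^{T} = \widetilde{L}(G)$.

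Next I would apply the Cauchy--Binet formula to $\det(\widetilde{L}(G)) = \det(M_{i} M_{i}^{T})$, obtaining
\[
\det(\widetilde{L}(G)) = \sum_{S} \det(M_{i}[S])^{2},
\]
where $S$ ranges over all $(n-1)$-element subsets of $E(G)$ and $M_{i}[S]$ denotes the $(n-1)\times(n-1)$ submatrix of $M_{i}$ with columns indexed by $S$. It then suffices to prove the combinatorial core: for an $(n-1)$-subset $S \subseteq E(G)$ one has $\det(M_{i}[S]) = \pm 1$ when the edge set $S$ is a spanning tree of $G$, and $\det(M_{i}[S]) = 0$ otherwise. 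Since a forest on $n$ vertices with $n-1$ edges is automatically a spanning tree, the dichotomy is simply: $S$ contains a cycle, or $S$ is a spanning tree.

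For the vanishing case, I would observe that if $S$ contains a cycle, then reading the orientation signs consistently around that cycle produces a nontrivial linear dependence among the corresponding columns of $M$, hence of $M_{i}[S]$, so its determinant is $0$. For a spanning tree $S$, I would induct on $n$: a tree on $n \geq 2$ vertices has at least two leaves, so we may pick a leaf $w \neq i$; its unique incident edge contributes a row of $M_{i}[S]$ with a single nonzero entry $\pm 1$, and expanding the determinant along that row reduces the claim to the reduced incidence matrix of the spanning tree on $[n]\setminus\{w\}$, with base case the empty $1\times 1$ situation where the determinant is $1$. Combining the two cases, $\det(M_{i}[S])^{2}$ is exactly the indicator that $S \in {\rm SPT}(G)$, and summing over $S$ yields $\det(\widetilde{L}(G)) = |{\rm SPT}(G)|$.

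The main obstacle I anticipate is the leaf-peeling induction for spanning trees, together with the careful bookkeeping of the deleted index $i$: one must ensure that a leaf distinct from $i$ can always be selected (guaranteed since a tree on at least two vertices has two leaves) and that the cofactor expansion genuinely returns the reduced incidence matrix of a strictly smaller tree, so the induction hypothesis applies. Once this $\pm 1$ lemma is established, the conclusion is a formal consequence of Cauchy--Binet and requires no further work.
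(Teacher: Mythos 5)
Your proposal is correct. Note, however, that the paper does not actually prove this theorem: it is quoted as a classical result with a citation to Kirchhoff (and a pointer to a recent elementary proof), and the paper's contribution lies in applying it, so there is no in-paper argument to compare yours against. What you have written is the standard incidence-matrix/Cauchy--Binet proof, and all the key steps are sound: the verification that $M M^{T} = L(G)$ and $M_{i}M_{i}^{T} = \widetilde{L}(G)$, the reduction via Cauchy--Binet to the $\pm 1$/$0$ dichotomy for $\det(M_{i}[S])$, the signed cycle relation giving the vanishing case, and the leaf-peeling induction (a tree on at least two vertices has at least two leaves, so a leaf distinct from the deleted vertex $i$ always exists, and cofactor expansion along its row does hand you the reduced incidence matrix of the smaller tree). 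The only cosmetic point is the base case: it is cleanest to terminate the induction at a single-vertex tree with the empty $0\times 0$ matrix, whose determinant is $1$ by convention, rather than at a ``$1\times 1$'' stage; either convention works, but you should state one precisely in a final write-up.
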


\noindent Moreover, the matrix tree theorem remains valid for multigraphs without loops. 
In this context, multiple edges between the same pairs of vertices are allowed. 
However, loops are excluded from consideration, as they do not contribute to the enumeration of spanning trees.
For more details, see \cite{Aigner}. 

As a direct application of Theorem~\ref{Thm:MTT}, one can compute the number of spanning trees of complete graphs and complete bipartite graphs as follows:
\[
|{\rm SPT}(K_n)| = \det(\widetilde{L}(K_n)) = n^{n-2}, \quad \text{and} \quad |{\rm SPT}(K_{m,n})| = \det(\widetilde{L}(K_{m,n})) = m^{n-1} n^{m-1}.
\]

Suppose that in a graph $G$, an edge $e \in E(G)$ is chosen, and we denote the resulting marked graph as $(G; e)$.
Let ${\rm SPT}(G; e)$ denote the set of all spanning trees of $G$ that contain the edge $e$.
Let $G^{\prime}=G \setminus \{e\}$ be the graph obtained by deleting the edge $e$ from $G$.
Then we have the decomposition
\begin{equation}\label{eq:xSPT}
    |{\rm SPT}(G)| =  |{\rm SPT}(G^{\prime})| + |{\rm SPT}(G;e)|. 
\end{equation}
Let $e=\{i_{0},j_{0}\}\in E(G)$ and in the marked graph $(G;e)$, the edge $e$ be assigned a variable weight $x$.
Let $L_{x}(G;e)$ denote the Laplacian matrix of $(G;e)$, and $\widetilde{L}_{x}(G;e)$ denote its reduced Laplacian matrix.
Then the entries of the Laplacian matrix $L_{x}(G;e)$ are as follows. 
For $i\in\{i_{0},j_{0}\}$, the diagonal entry corresponding to the vertex $i$ is equal to $\deg_{G^{\prime}}(i)+x$.
The non-diagonal entries at positions $(i_{0},j_{0})$ and $(j_{0},i_{0})$ are equal to $-x$, while all other entries are the same as those in the Laplacian matrix of $G$.

We shall now modify the matrix tree theorem to count the number of spanning trees of a simple graph that contain a specified edge.

\begin{theorem}[Modified matrix tree theorem]\label{thm:modifiedMTT}
Let $(G; e)$ be the graph $G$ with a specific edge $e\in E(G)$, to which we assign a variable weight $x$. Then, the number of spanning trees of $G$ that contain the edge $e$ is equal to the coefficient of $x$ in $\det(\widetilde{L}_{x}(G;e))$.   
\end{theorem}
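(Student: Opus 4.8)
The plan is to relate the weighted reduced Laplacian $\widetilde{L}_x(G;e)$ to the ordinary (unweighted) situation by tracking the $x$-dependence explicitly. Observe that, by the description of the entries given above, $\widetilde{L}_x(G;e)$ is an affine function of $x$: we may write $\widetilde{L}_x(G;e) = \widetilde{L}(G') + x\,M$, where $G' = G\setminus\{e\}$ and $M$ is a fixed integer matrix that does not depend on $x$. Here $M$ records the contribution of the weighted edge $e = \{i_0,j_0\}$: when neither $i_0$ nor $j_0$ is the deleted index, $M$ has $+1$ in the two diagonal positions corresponding to $i_0$ and $j_0$, $-1$ in the two off-diagonal positions $(i_0,j_0)$ and $(j_0,i_0)$, and $0$ elsewhere; when one of $i_0,j_0$ coincides with the deleted index, only a single $+1$ survives on the diagonal. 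In all cases $M$ has rank one (it is $(\mathbf{e}_{i_0}-\mathbf{e}_{j_0})(\mathbf{e}_{i_0}-\mathbf{e}_{j_0})^{\mathsf T}$ after the deletion, with the convention that the basis vector indexed by the removed row/column is $0$).

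First I would expand $\det(\widetilde{L}_x(G;e)) = \det(\widetilde{L}(G') + xM)$ as a polynomial in $x$ using multilinearity of the determinant in the columns (or, equivalently, the matrix-determinant lemma). Because $M$ has rank one, every term in this expansion that involves $x$ to a power $\geq 2$ vanishes: expanding the determinant as a sum over subsets $S$ of columns in which we substitute the corresponding column of $xM$, any such term with $|S|\geq 2$ is a determinant of a matrix with two proportional columns (both lying in the one-dimensional column space of $M$), hence is zero. Therefore $\det(\widetilde{L}_x(G;e))$ is affine in $x$: it equals $\det(\widetilde{L}(G')) + x\,C$ for some constant $C$, and $C$ is exactly the coefficient of $x$.

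Next I would identify the constant term and the linear coefficient combinatorially. Setting $x=0$ gives $\det(\widetilde{L}_0(G;e)) = \det(\widetilde{L}(G')) = |{\rm SPT}(G')|$ by Theorem~\ref{Thm:MTT}. On the other hand, setting $x=1$ restores the edge $e$ with its original weight, so $\widetilde{L}_1(G;e) = \widetilde{L}(G)$ and $\det(\widetilde{L}_1(G;e)) = |{\rm SPT}(G)|$. Comparing, $C = |{\rm SPT}(G)| - |{\rm SPT}(G')|$, which by the deletion decomposition \eqref{eq:xSPT} is precisely $|{\rm SPT}(G;e)|$. Hence the coefficient of $x$ in $\det(\widetilde{L}_x(G;e))$ equals $|{\rm SPT}(G;e)|$, as claimed.

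The main obstacle — really the only subtle point — is justifying rigorously that no quadratic or higher powers of $x$ appear; everything else is bookkeeping. I would make the rank-one argument airtight by writing $M = v v^{\mathsf T}$ with $v = \mathbf{e}_{i_0} - \mathbf{e}_{j_0}$ (projected to the coordinates surviving after the row/column deletion), and then either invoking the matrix-determinant lemma $\det(A + v v^{\mathsf T}) = \det(A) + v^{\mathsf T}\,\mathrm{adj}(A)\,v$ directly, or carrying out the column-expansion argument above and noting that any minor picking up two columns from $xM = x v v^{\mathsf T}$ has two columns that are scalar multiples of $v$. A minor alternative, if one prefers to avoid the adjugate, is to appeal to the multigraph version of Theorem~\ref{Thm:MTT}: for integer $x = k \geq 1$, $(G;e)$ with weight $k$ is literally the multigraph obtained from $G$ by replacing $e$ with $k$ parallel copies, and each spanning tree of $G$ containing $e$ lifts to $k$ spanning trees of this multigraph while spanning trees avoiding $e$ lift uniquely; this shows $\det(\widetilde{L}_k(G;e)) = |{\rm SPT}(G')| + k\,|{\rm SPT}(G;e)|$ for all positive integers $k$, and since a polynomial agreeing with an affine function at infinitely many points is that affine function, the coefficient of $x$ is $|{\rm SPT}(G;e)|$.
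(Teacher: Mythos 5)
Your proposal is correct, and its core is the same as the paper's: evaluate $\det(\widetilde{L}_x(G;e))$ at $x=0$ and $x=1$ to get $|{\rm SPT}(G')|$ and $|{\rm SPT}(G)|$ respectively, then read off the linear coefficient via the deletion decomposition \eqref{eq:xSPT}. The one genuine difference is that you actually prove the key structural fact that the paper only asserts, namely that $\det(\widetilde{L}_x(G;e))$ is affine in $x$. This is not automatic: $x$ occurs in up to four entries of the matrix, so the Leibniz expansion a priori contains $x^2$ terms (e.g.\ from permutations using both diagonal positions $(i_0,i_0)$ and $(j_0,j_0)$). Your observation that $\widetilde{L}_x(G;e)=\widetilde{L}(G')+x\,vv^{\mathsf T}$ with $v=\mathbf{e}_{i_0}-\mathbf{e}_{j_0}$ (suitably truncated), so that the $x$-perturbation has rank one and multilinearity kills all terms of degree $\ge 2$, is exactly the missing justification, and it is airtight. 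Your alternative argument via the multigraph version of Theorem~\ref{Thm:MTT} at integer weights $x=k$ is also valid and has the virtue of bypassing the degree question entirely, since it identifies the polynomial with an affine function at infinitely many points. Either version is a strict improvement in rigor over the two-point evaluation alone, which by itself only determines a polynomial known in advance to have degree at most one.
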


\begin{proof}
The determinant $\det(\widetilde{L}_{x}(G;e))$ is a linear polynomial in $x$.
That is, we can write $\det(\widetilde{L}_{x}(G;e))=\alpha+\beta x$, where $\alpha$ and $\beta$ are constants independent of $x$.
Note that when $x=0$, the reduced Laplacian matrix $\widetilde{L}_{0}(G;e)$ is equal to the reduced Laplacian matrix of the graph $G^{\prime}=G\setminus\{e\}$, that is, $\widetilde{L}_{0}(G;e)=\widetilde{L}(G^{\prime})$.
Similarly, when $x=1$, $\widetilde{L}_{1}(G;e)$ is equal to the reduced Laplacian matrix of graph $G$, i.e., the matrix $\widetilde{L}_{1}(G;e)=\widetilde{L}(G)$.
Therefore, we have
\[
\det(\widetilde{L}_{0}(G;e)) = \alpha = |{\rm SPT}(G^{\prime})|, \quad \text{and} \quad \det(\widetilde{L}_{1}(G;e)) = \alpha + \beta = |{\rm SPT}(G)|.
\]
Thus, using \eqref{eq:xSPT} and the above observations, we obtain
\begin{align*}
|{\rm SPT}(G;e)|  = |{\rm SPT}(G)| - |{\rm SPT}(G^{\prime})|= \beta. 
\end{align*}
Hence, the number of spanning trees of $G$ that contain the edge $e$ is equal to the coefficient of  $x$ in $\det(\widetilde{L}_{x}(G;e))$.
\end{proof}

A {\it rooted} tree with the vertex set $[n]$ is a tree in which a specific vertex, called the {\it root}, is distinguished, defining a natural orientation for the graph.
We assume that all trees $T \in \rm{SPT}(G)$ are oriented such that every edge is directed away from the root $r$.
Consequently, for each vertex $v\in V(G)$ with $v\neq r$, there exists a unique directed path from $r$ to $v$.
Any vertex $u\neq r$ that appears in this unique path from $r$ to $v$ is referred to as an {\it ancestor} of $v$, while $v$ is called a {\it descendant} of $u$.
If $v$ is a descendant of $u$ and no other vertex in $V(G)$ lies between them on this unique path in $T$, then $v$ is said to be a {\it child} of $u$, and equivalently, $u$ is the {\it parent} of $v$.
In $T$, each vertex $v\neq r$ has exactly one parent.
A rooted tree $T$ with the vertex set $[n]$ is called an {\it uprooted tree} if the root is greater than all of its children.
For a graph $G$ with $V(G)=[n]$, the collection of all uprooted spanning trees of $G$ is denoted by $\mathcal{U}_{G}$.


\section{\texorpdfstring{Distribution of $(n-1)^{n-1}$ using the matrix tree theorem}{Distribution of Kn using the matrix tree theorem }}\label{sec:Kn}

Chauve, Dulucq, and Guibert \cite{CDG} proved the following identity and provided a combinatorial interpretation.
For $n\geq 2$,
\begin{equation}\label{eq:identity1}
(n-1)^{n-1}=\sum_{k=0}^{n-1}(n-1-k)~n^{n-2-k}~(n-1)^{k-1}.
\end{equation}

Let $\mathcal{U}_{K_n}$ denote the set of all uprooted spanning trees of the complete graph $K_{n}$ and let $\mathcal{A}_n$ denote the set of rooted spanning trees of $K_n$ in which vertex $1$ be a non-root leaf.
Chauve et al. \cite{CDG} constructed an elegant bijection between certain subsets of labeled rooted spanning trees of $K_n$, resulting in a bijection $\varphi \colon \mathcal{U}_{K_n} \longrightarrow\mathcal{A}_{n}$.
A brief description of the bijection $\varphi$ is also given in \cite{CGS}.
Each rooted tree in $\mathcal{A}_n$ can be constructed by first choosing a labeled spanning tree with the vertex set $\{2,3,\dots,n\}$ in $(n-1)^{n-3}$ ways, then selecting a root from these $(n-1)$ vertices, and finally attaching the leaf $1$ to any of the remaining $(n-1)$ vertices. Hence, $|\mathcal{A}_n|=(n-1)^{n-3}(n-1)(n-1)=(n-1)^{n-1}$.
Furthermore, the Pr\"ufer sequence of any tree in $\mathcal{A}_n$ is a sequence of length $n-1$ with entries from $\{2,3,\dots , n\}$, where the last term of the sequence corresponds to the root.
This characterization provides a direct proof that $|\mathcal{U}_{K_n}| = (n-1)^{n-1}$.

Let $\mathcal{T}_{r}\subseteq\mathcal{U}_{K_{n}}$ denote the set of all uprooted spanning trees of $K_n$ with root $r$.
For $n\geq 2$ and $0\leq k\leq n-1$, it is shown in \cite{CDG}, using a modified Pr\"ufer sequence, that $$|\mathcal{T}_{n-k}|=(n-1-k)~n^{n-2-k}~(n-1)^{k-1}.$$
As $\mathcal{U}_{K_{n}}= \coprod_{k=0}^{n-1} \mathcal{T}_{n-k}$, this provides a combinatorial explanation of the identity~\eqref{eq:identity1}. 

In this section, we present an alternative derivation of the formula for $|\mathcal{T}_{n-k}|$ as an application of the matrix tree theorem.

\begin{theorem}[Chauve--Dulucq--Guibert]\label{thm:Kn}
For $n\geq 2$ and $0\leq k\leq n-1$, let $\mathcal{T}_{n-k}\subseteq \mathcal{U}_{K_{n}}$ be the set of all the uprooted spanning trees of $K_n$ with root $n-k$. Then we have
\[
|\mathcal{T}_{n-k}|=(n-1-k)~n^{n-2-k}~(n-1)^{k-1}. 
\]
\end{theorem}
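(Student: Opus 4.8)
The plan is to compute $|\mathcal{T}_{n-k}|$ directly by applying the modified matrix tree theorem (Theorem~\ref{thm:modifiedMTT}) to a cleverly marked version of $K_n$. The key observation is that an uprooted spanning tree with root $r=n-k$ is one in which the $k$ vertices $n-k+1, n-k+2, \dots, n$ (the vertices larger than $r$) are \emph{not} children of the root; equivalently, the edge-set of the tree, when oriented away from $r$, contains none of the $k$ edges $\{r, r+1\}, \{r, r+2\}, \dots, \{r, n\}$ emanating from $r$ toward larger labels, and each such vertex $r+j$ must instead reach $r$ through a longer directed path. A cleaner way to organize this: first count uprooted spanning trees with root $r$ such that no vertex in $\{r+1,\dots,n\}$ is adjacent to $r$ at all, and show directly that this number is $(n-1-k)\,n^{n-2-k}\,(n-1)^{k-1}$.

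First I would fix $r = n-k$ and consider the graph $H$ obtained from $K_n$ by deleting the $k$ edges $\{r, r+1\}, \dots, \{r, n\}$, so that in $H$ the vertex $r$ has degree $n-1-k$. A spanning tree of $H$ is exactly a spanning tree of $K_n$ in which no vertex larger than $r$ is adjacent to $r$; rooting it at $r$, it is automatically uprooted because the children of $r$ form a subset of $\{1,\dots,r-1\}$, hence are all smaller than $r$, and more importantly every other vertex $v>r$ has a parent which — by induction down the unique path from $r$ — one checks lies below... Here I would instead argue the point carefully: actually $\mathcal{T}_{n-k}$ is \emph{not} the same as $\mathrm{SPT}(H)$, since an uprooted tree only requires the root to exceed its \emph{children}, not all descendants. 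So the genuinely correct reduction is: $T \in \mathcal{T}_{n-k}$ iff $r$ is not adjacent in $T$ to any of $r+1,\dots,n$, which \emph{is} exactly $\mathrm{SPT}(H)$; conversely any spanning tree of $H$ rooted at $r$ has all children of $r$ in $\{1,\dots,r-1\}$, hence is uprooted. Thus $|\mathcal{T}_{n-k}| = |\mathrm{SPT}(H)|$, and I would then apply Theorem~\ref{Thm:MTT}: compute $\det(\widetilde{L}(H))$ by deleting the row and column indexed by $r$.

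The main computation is this determinant. The reduced Laplacian $\widetilde{L}(H)$ (delete row/column $r$) is an $(n-1)\times(n-1)$ matrix: it agrees with $\widetilde{L}(K_n)$ except that the diagonal entries for the $k$ vertices $r+1,\dots,n$ are reduced from $n-1$ to $n-2$ (each lost its edge to $r$). So $\widetilde{L}(H) = \widetilde{L}(K_n) - E$, where $E$ is the diagonal matrix with $1$'s in the $k$ positions corresponding to $r+1,\dots,n$ and $0$'s elsewhere. Writing $\widetilde{L}(K_n) = nI - J$ (with $J$ the all-ones matrix of size $n-1$), we get $\widetilde{L}(H) = nI - E - J$. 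I would evaluate $\det(nI - E - J)$ using the matrix determinant lemma: $\det(nI-E-J) = \det(nI-E)\bigl(1 - \mathbf{1}^{T}(nI-E)^{-1}\mathbf{1}\bigr)$. Here $nI-E$ is diagonal with $(n-1-k)$ entries equal to $n$ and $k$ entries equal to $n-1$, so $\det(nI-E) = n^{\,n-1-k}(n-1)^{k}$ and $\mathbf{1}^{T}(nI-E)^{-1}\mathbf{1} = \frac{n-1-k}{n} + \frac{k}{n-1}$. Therefore
\[
|\mathcal{T}_{n-k}| = n^{\,n-1-k}(n-1)^{k}\left(1 - \frac{n-1-k}{n} - \frac{k}{n-1}\right) = n^{\,n-1-k}(n-1)^{k}\cdot\frac{n(n-1) - (n-1-k)(n-1) - kn}{n(n-1)},
\]
and the numerator simplifies to $(n-1)(k+1) - kn = k+1 - k = \dots$ — I expect it to collapse to exactly $n-1-k$ after expansion, yielding $n^{\,n-2-k}(n-1)^{k-1}(n-1-k)$, as claimed.

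The two places where care is genuinely needed, and which I regard as the main obstacle, are: (i) the combinatorial identification $|\mathcal{T}_{n-k}| = |\mathrm{SPT}(H)|$ — one must verify in both directions that "root exceeds its children" is equivalent to "root not adjacent to any larger-labelled vertex", which is immediate once one recalls that the children of the root are precisely the vertices adjacent to it; and (ii) correctly handling the small/boundary cases $k=0$ (where $H = K_n$, $(n-1)^{k-1} = (n-1)^{-1}$, and the formula reads $n^{n-2}/(n-1)\cdot(n-1) \cdot \dots$, i.e. the factor $(n-1)^{k-1}$ must be interpreted so the product telescopes — in fact for $k=0$ one gets $(n-1)\,n^{n-2}(n-1)^{-1} = n^{n-2} = |\mathrm{SPT}(K_n)|$, consistent) and $k=n-1$ (root $=1$, which can have no children at all unless $n=2$, so $|\mathcal{T}_1| = 0$ for $n\ge 3$, matching the factor $n-1-k = 0$). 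The arithmetic simplification of the numerator is routine and I would not belabor it.
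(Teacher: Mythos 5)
Your proposal is correct and follows essentially the same route as the paper: you reduce $|\mathcal{T}_{n-k}|$ to counting spanning trees of the same auxiliary graph $G_{n-k}=K_n\setminus\{e_{n-k,i}\colon n-k+1\le i\le n\}$ via the observation that the children of the root are exactly its tree-neighbors, and then apply the matrix tree theorem. The only difference is cosmetic: you evaluate $\det(nI-E-J)$ with the matrix determinant lemma rather than by explicit row and column operations, and your arithmetic (the numerator $(n-1)(k+1)-kn=n-1-k$) checks out.
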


\begin{proof}
Let us consider $\mathcal{T}_{n}$, the set of all uprooted spanning trees of $K_n$ with root $n$. 
For any tree $T\in {\rm SPT}(K_n)$, designating $n$ as the root results in an uprooted spanning tree of $K_n$ with root $n$. 
Thus, the number of such trees is given by $|\mathcal{T}_{n}|=|{\rm SPT}(K_n)|=n^{n-2}$.

Now, we extend our analysis to the general case by considering $\mathcal{T}_{n-k}$, the set of all uprooted spanning trees of $K_n$ with root $n-k$, where $0\leq k\leq n-1$.
For any tree $T\in\mathcal{T}_{n-k}$, observe that the root $n-k$ is not adjacent to any vertex $i$ where $i > n-k$. 
To account for this, we define the graph $$G_{n-k}= K_n\backslash \{e_{n-k,i}\colon n-k+1\leq i\leq n\},$$ where $e_{u,v}$ represents the edge $\{u,v\}\in E(K_n)$.
By construction, $G_{n-k}$ ensures that the vertex $n-k$ has no edges connecting it to any vertex $i$ for $ n-k+1\leq i\leq n$.
Thus, choosing any spanning tree of $G_{n-k}$ and designating $ n-k$ as the root results in an uprooted spanning tree of $K_{n}$ with root $n-k$. 
Therefore, we obtain 
\begin{equation}\label{eq:01}
      |\mathcal{T}_{n-k}|=|{\rm SPT}(G_{n-k})|, \ \text{ for all } 0\leq k\leq n-1.
\end{equation}

To determine the number of spanning trees of $G_{n-k}$, we apply the matrix tree theorem.
We obtain the reduced Laplacian matrix of $G_{n-k}$ by deleting the row and column corresponding to the root $n-k$ from the Laplacian matrix $L(G_{n-k})$.
The resulting reduced Laplacian matrix $\widetilde{L}(G_{n-k})$ of $G_{n-k}$ is an $(n-1)\times (n-1)$ matrix, where the $i$-th diagonal entry is equal to $n-1$ for $1 \leq i \leq n-k-1$, and equal to $n-2$ for $n-k \leq i \leq n-1$, while all non-diagonal entries are $-1$.

Let $R_{i}$ denote the $i$-th row and $C_{j}$ denote the $j$-th column of the matrix $\widetilde{L}(G_{n-k})$.
Let us perform the following series of row and column operations on $\widetilde{L}(G_{n-k})$.
\begin{enumerate}
    \item $C_1 \longrightarrow C_1 + \sum_{i=2}^{n-1} C_{i}$,
    \item $R_i \longrightarrow R_i - R_{n-k}$, for all $ 1\leq i\leq  n-k-1$,
    \item $R_i \longrightarrow R_i - R_1$, for all $2\leq i\leq  n-k-1$, and then
    \item For each $n-k \leq i\leq n-1$, perform $R_i \longrightarrow R_i + \frac{1}{n}R_{j}$, for all $2\leq j\leq  n-k-1$.
\end{enumerate}
After applying the above row and column operations in the given order, the matrix $\widetilde{L}(G_{n-k})$ reduces to the following block matrix
\[
\widetilde{L}(G_{n-k})^{\prime} = 
\begin{bNiceArray}{c|c}[margin,columns-width=auto]
  A & B \\
  \hline
 C & D
\end{bNiceArray}_{(n-1)\times(n-1)},
\]
where $A={\rm diag}[1 ,n, \dots , n]$ is an $(n-k-1)\times (n-k-1)$ diagonal matrix, $B$ is an $(n-k-1)\times k$ matrix with all entries zero except for the first entry, which is $-(n-1)$, $C$ is a $k\times (n-k-1)$ zero matrix and $D$ is a $k\times k$ matrix with diagonal entries equal to $n-2$ and non-diagonal entries equal to $-1$.

As the matrix $\widetilde{L}(G_{n-k})^{\prime}$ is an upper triangular block matrix, its determinant is given by ${\rm det}(\widetilde{L}(G_{n-k})^{\prime})= {\rm det}(A)\cdot{\rm det}(D)$.  
Since $A$ is a diagonal matrix, its determinant is the product of its diagonal entries, yielding ${\rm det}(A)=n^{n-k-2}$.  
Now, let us compute the determinant of the matrix $D$.
Performing the column operation $C_1 \longrightarrow C_1 + \sum_{i=2}^{k} C_{i}$ followed by the row operations $R_i \longrightarrow R_i - R_1$, for all $2 \leq i\leq k-1$, on the matrix $D$, we transform it into the matrix $D^{\prime}$ given by
\[
D^{\prime} =
\begin{blockarray}{ccccccc}
\begin{block}{[cccccc]c}
n-k-1& -1 & -1& \cdots  & -1 & -1  \\
0 & n-1 &  0  & \cdots & 0& 0 \\
0  & 0 & n-1 & \cdots & 0 & 0\\
\vdots & \vdots &\vdots & \ddots & \vdots & \vdots\\
0& 0 &0 &\cdots &n-1 & 0\\
0& 0 & 0 &\cdots & 0 &n-1  & \text{\scriptsize $k \times k$} & \\
\end{block}
\end{blockarray}. 
\]
The determinant of $D^{\prime}$ is clearly ${\rm det}(D^{\prime})=(n-k-1)(n-1)^{k-1}$. 
Since row and column operations do not change the determinant, we have $${\rm det}(D)={\rm det}(D^{\prime})= (n-k-1)(n-1)^{k-1}.$$
Therefore, we have ${\rm det}(\widetilde{L}(G_{n-k})^{\prime})=n^{n-k-2}~(n-k-1)~(n-1)^{k-1}.$
By the matrix tree theorem, the number of spanning trees of $G_{n-k}$ is  
\begin{equation}\label{eq:02}
     |{\rm SPT}(G_{n-k})| = {\rm det}(\widetilde{L}(G_{n-k})^{\prime})=n^{n-k-2}~(n-k-1)~(n-1)^{k-1}, \ \text{ for all } 0\leq k\leq n-1.
\end{equation}
Thus, from \eqref{eq:01} and \eqref{eq:02}, we have 
\begin{align*}
|\mathcal{T}_{n-k}|=(n-1-k)~n^{n-2-k}~(n-1)^{k-1}, \ \text{ for all } 0\leq k\leq n-1. 
\end{align*}
This concludes our proof.
\end{proof}

A refinement of the enumeration of uprooted spanning trees obtained in Theorem~\ref{thm:Kn} will be presented in Section~\ref{sec:refinement}.


\section{\texorpdfstring{Distribution of $m^{n-1}n^{m-1}$ using the matrix tree theorem}{Distribution of Kmn using the matrix tree theorem }}\label{sec:Kmn}

In this section, we explore the enumeration of spanning trees of the complete bipartite graph $K_{m,n}$. 
In this process, we count the number of uprooted spanning trees of $K_{m,n}$ with root $m+n$ using the matrix tree theorem and derive the following new identity
\begin{equation}\label{eq:identity2} 
m^{n-1}~n^{m-1}=\sum_{k=1}^{m}m^{n-2}~n^{m-k-1}~(n-1)^{k-1}~(m+n-k).
\end{equation}

Let $\mathfrak{T}_{m+n}\subseteq \mathcal{U}_{K_{m,n}}$ denote the set of all uprooted spanning trees of $K_{m,n}$ with root $m+n$.
Since $m+n$ is greater than all other vertices $i\in V(K_{m,n})$, every spanning tree in ${\rm SPT}(K_{m,n})$ becomes an uprooted spanning tree of $K_{m,n}$ when rooted at $m+n$.
Therefore, we have 
\begin{equation}\label{eq:SPT(K_mn)}
   |\mathfrak{T}_{m+n}|=  |{\rm SPT}(K_{m,n})| = m^{n-1}~n^{m-1}.
\end{equation}
Here, we enumerate the elements of $\mathfrak{T}_{m+n}$ by considering the highest child of the root $m+n$.

\begin{theorem}\label{thm:Kmn}
Let $\mathfrak{T}_{m+n}^{k}\subseteq \mathfrak{T}_{m+n}$ be the set of all uprooted spanning trees of $K_{m,n}$ with root $m+n$, such that the highest child of the root equals $m+1-k$, for $1\leq k\leq m$. Then, 
\begin{align*}
|\mathfrak{T}_{m+n}^{k}|=m^{n-2}~n^{m-k-1}~(n-1)^{k-1}~(m+n-k).  
\end{align*}
\end{theorem}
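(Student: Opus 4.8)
The plan is to follow the template of the proof of Theorem~\ref{thm:Kn}: express $|\mathfrak{T}_{m+n}^k|$ as a difference of spanning-tree counts of two edge-deleted complete bipartite graphs and evaluate each by the matrix tree theorem. Write the two parts of $K_{m,n}$ as $X=\{1,\dots,m\}$ (each vertex of degree $n$) and $Y=\{m+1,\dots,m+n\}$ (each of degree $m$), so the root $m+n$ lies in $Y$ and, in any spanning tree rooted at $m+n$, the children of the root are exactly its neighbours, all of which lie in $X$. Hence the highest child equals $m+1-k$ precisely when the tree uses the edge $e_{m+n,\,m+1-k}$ but none of the edges $e_{m+n,j}$ with $m+2-k\le j\le m$. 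For $0\le t\le m$ set $G_t:=K_{m,n}\setminus\{e_{m+n,j}:m-t+1\le j\le m\}$, the graph in which $m+n$ is joined only to $\{1,\dots,m-t\}$; then the spanning trees of $K_{m,n}$ omitting the edges $e_{m+n,j}$ with $m-t+1\le j\le m$ are precisely the spanning trees of $G_t$, and since $G_k\subseteq G_{k-1}$,
\[
|\mathfrak{T}_{m+n}^k|=|{\rm SPT}(G_{k-1})|-|{\rm SPT}(G_k)|.
\]
(Alternatively, one may apply Theorem~\ref{thm:modifiedMTT} to the marked graph $(G_{k-1};e_{m+n,\,m+1-k})$ together with \eqref{eq:xSPT}.) Note that $G_m$ has $m+n$ as an isolated vertex, so $|{\rm SPT}(G_m)|=0$; thus the task reduces to computing $|{\rm SPT}(G_t)|$ for $0\le t\le m-1$.

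For this I would delete the row and column of $m+n$ from $L(G_t)$ and index what remains by $X$ followed by $Y':=\{m+1,\dots,m+n-1\}$. Because $K_{m,n}$ is bipartite and only edges at $m+n$ were removed, the reduced Laplacian is the block matrix
\[
\widetilde L(G_t)=\begin{pmatrix}\Delta & -J\\[2pt] -J^{\mathsf{T}} & m\,I_{n-1}\end{pmatrix},\qquad
\Delta={\rm diag}[\,\underbrace{n,\dots,n}_{m-t},\ \underbrace{n-1,\dots,n-1}_{t}\,],
\]
with $J$ the $m\times(n-1)$ all-ones matrix (the $X$--$X$ and $Y'$--$Y'$ blocks are diagonal, and all $X$--$Y'$ entries are $-1$). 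Taking the Schur complement of the block $m\,I_{n-1}$ (equivalently, clearing $-J$ by elementary row operations in the style of Theorem~\ref{thm:Kn}) gives $\det\widetilde L(G_t)=m^{n-1}\det\!\big(\Delta-\tfrac{n-1}{m}\mathbf{1}\mathbf{1}^{\mathsf{T}}\big)$, and a rank-one determinant update (or a few further row/column operations) then yields $\det\widetilde L(G_t)=m^{n-2}(m-t)\,n^{m-t-1}(n-1)^t$, using the simplification $1-\tfrac{n-1}{m}\big(\tfrac{m-t}{n}+\tfrac{t}{n-1}\big)=\tfrac{m-t}{mn}$. By the matrix tree theorem this equals $|{\rm SPT}(G_t)|$.

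Finally I would substitute $t=k-1$ and $t=k$ (with $|{\rm SPT}(G_m)|=0$ handling the case $k=m$), factor out $m^{n-2}\,n^{m-k-1}(n-1)^{k-1}$, and simplify the resulting bracket $(m-k+1)n-(m-k)(n-1)=m+n-k$ to obtain the claimed formula; summing over $k=1,\dots,m$ telescopes to \eqref{eq:identity2}. The only genuine work is the determinant of $\widetilde L(G_t)$ — recognising the block structure and carrying out the rank-one reduction cleanly — together with a little care at the boundary value $k=m$, where $G_m$ is disconnected and the nominal exponent $m-k-1$ is negative.
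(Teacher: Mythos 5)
Your proposal is correct and follows essentially the same route as the paper: you delete the edges from the root $m+n$ to the vertices $m+2-k,\dots,m$ (your $G_{k-1}$ is the paper's $G_{m+n}^{k}$) and evaluate the same block reduced Laplacian via a Schur complement, arriving at the same factorization $m^{n-2}\,n^{m-k-1}(n-1)^{k-1}(m+n-k)$. The only cosmetic difference is that you invoke the deletion identity \eqref{eq:xSPT} and compute $|{\rm SPT}(G_{k-1})|-|{\rm SPT}(G_{k})|$ as two unweighted determinants, whereas the paper assigns a variable weight $x$ to the edge $\{m+1-k,m+n\}$ and extracts the coefficient of $x$ via Theorem~\ref{thm:modifiedMTT}; these are equivalent by the paper's own proof of that theorem, and your rank-one-update simplification and the boundary check at $k=m$ are both correct.
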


\begin{proof}
Observe that any uprooted spanning tree in $\mathfrak{T}_{m+n}^{k}$ has root $m+n$, and this root is not adjacent to any vertex in the set $\{m+2-k,m+3-k,\dots , m\}\subset V(K_{m,n})$.
Therefore, consider the subgraph $G_{m+n}^{k}$ of $K_{m,n}$ defined as $$G_{m+n}^{k} = K_{m,n}\backslash \{ e_{m+n,i}\colon m+2-k\leq i\leq m\},$$
where $e_{u,v}$ denotes the edge $\{u,v\}\in E(K_{m,n})$.
Note that, in any spanning tree of $G_{m+n}^{k}$, the vertex $m+n$ is not adjacent to any vertex in $\{m+2-k,m+3-k,\dots , m\}$. 

Now consider the graph $(G_{m+n}^{k};e)$, where $e=\{m+1-k,m+n\}\in E(G_{m+n}^{k})$, and let us assign a variable weight $x$ to the edge $e$.
Selecting any spanning tree of $(G_{m+n}^{k};e)$ and designating  $m+n$ as its root yields an uprooted spanning tree in $\mathfrak{T}_{m+n}^{k}$.
Thus, we have
\begin{equation}\label{eq:SPTx}
|\mathfrak{T}_{m+n}^{k}|= |{\rm SPT}(G_{m+n}^{k};e)|,
\end{equation}
where ${\rm SPT}(G_{m+n}^{k};e)\coloneqq\{T\in {\rm SPT}(G_{m+n}^{k})\colon e=\{m+1-k,m+n\}\in E(T)\}$.

We now enumerate the spanning trees of $(G_{m+n}^{k};e)$ using the modified matrix tree theorem (Theorem~\ref{thm:modifiedMTT}).
By deleting the $(m+n)$-th row and column from the Laplacian matrix $L_{x}(G_{m+n}^{k};e)$, we obtain the reduced Laplacian matrix $\widetilde{L}_{x}(G_{m+n}^{k};e)$, which has the following block matrix form.
\[
\widetilde{L}_{x}(G_{m+n}^{k};e) = 
\begin{bNiceArray}{c|c}[margin,columns-width=auto]
  A_x & B_x \\
  \hline
 C_x & D_x
\end{bNiceArray}_{(m+n-1)\times(m+n-1)},
\]
where $A_x={\rm diag}[\alpha_1 ,\alpha_2, \dots , \alpha_m]$ with 
\begin{equation*}
    \alpha_i=
    \begin{cases}
    n  & \text{ if } 1\leq i\leq m+k, \\
    n-1+x   & \text{ if } i= m+1-k,\\
    n-1   & \text{ if } m+2-k \leq i \leq m,
    \end{cases}
\end{equation*}  
$B_x$ is an $m\times(n-1)$ matrix with all entries equal to $-1$, 
$C_x$ is an $(n-1)\times m$ matrix with all entries equal to $-1$ and 
$D_x$ is the $(n-1)\times (n-1)$ diagonal matrix ${\rm diag}[m,m, \dots , m]$. 
As $D_x$ is invertible, using the Schur's formula for determinants of block matrices, we have
\[
\det(\widetilde{L}_{x}(G_{m+n}^{k};e))= \det (D_x)\cdot \det (A_x-B_xD_{x}^{-1}C_x).
\]
Since $D_x$ is diagonal, we have $\det(D_x) = m^{n-1}$, and its inverse is given by $D_{x}^{-1}={\rm diag}\left[\frac{1}{m},\frac{1}{m}, \dots , \frac{1}{m}\right]$.
The matrix product $B_xD_{x}^{-1}C_x$ is a rank one matrix where every entry equals $\lambda\coloneqq\left(\frac{n-1}{m}\right)$. Therefore, $A_x-B_xD_{x}^{-1}C_x =[\beta_{ij}]_{1\leq i,j \leq m}$, where all non-diagonal entries are equal to $-\lambda$ and the diagonal entries are given by
\begin{equation*}
    \beta_{ij}=
    \begin{cases}
    n-\lambda  & \text{ if } 1\leq i\leq m-k, \\
    n-\lambda-1+x   & \text{ if } i=m+1-k,\\
    n-\lambda-1  & \text{ if } m+2-k \leq i\leq m.
    \end{cases}
\end{equation*}  

By Theorem~\ref{thm:modifiedMTT}, in order to compute $|{\rm SPT}(G_{m+n}^{k};e)|$, it suffices to extract the coefficient of $x$ in $\det(\widetilde{L}_{x}(G_{m+n}^{k};e))$. 
Noting that $\det(D_x)$ is independent of $x$ and the matrix $A_x-B_xD_{x}^{-1}C_x$ contains $x$ only in the $(m+1-k)$-th diagonal entry, it follows that the coefficient of $x$ in $\det(\widetilde{L}_{x}(G_{m+n}^{k};e))$ is given by
\[
\det (D_x)\cdot\det M_\lambda,
\]
where $M_\lambda$ is the $(m-1)\times (m-1)$ submatrix of $A_x-B_xD_{x}^{-1}C_x$ obtained by deleting the $(m+1-k)$-th row and column. In fact, 
\begin{equation*}
M_\lambda =
\begin{blockarray}{*{7}{c}r}
 & &  &  \substack{C_{m-k} \\ \downarrow} &  &  &  & \\
\begin{block}{[*{7}{c}]r}
 n-\lambda & -\lambda & \cdots & -\lambda & -\lambda &\cdots  & -\lambda  & \\
 -\lambda & n-\lambda & \cdots & -\lambda   & -\lambda  & \cdots & -\lambda  & \\
 \vdots & \vdots  & \ddots & \vdots & \vdots  &  \ddots & \vdots  & \\
 -\lambda  & -\lambda   & \cdots & n-\lambda  & -\lambda   &  \cdots & -\lambda  & \leftarrow \substack{R_{m-k} \\ }  \\
 -\lambda  & -\lambda   & \cdots & -\lambda  & n-\lambda-1   & \cdots& -\lambda & \\
 \vdots & \vdots    &\ddots  &\vdots & \vdots & \ddots &\vdots & \\
 -\lambda & -\lambda  & \cdots & -\lambda  & -\lambda  & \cdots & n-\lambda-1 & {\scriptstyle (m-1) \times (m-1)}& \\
\end{block}
\end{blockarray}.
\end{equation*}
Let $R_i$ and $C_j$ denote the $i$-th row and $j$-th column of $M_\lambda$, respectively.
We now perform the following sequence of row and column operations on $M_\lambda$.
\begin{enumerate}
    \item $C_{1} \longrightarrow C_{1} + \sum_{i=2}^{m-1} C_{i}$,
    \item $R_i \longrightarrow R_i - R_{1}$, for all $ 2\leq i\leq  m-1$,
    \item $C_{1} \longrightarrow C_{1} + \frac{1}{n-1} C_i$, for all $m-k+1\leq i\leq  m-1$.
\end{enumerate}
After applying these operations in the stated order, the matrix $M_\lambda$ is transformed into an upper triangular matrix $M_\lambda^{\prime}=[m^{\prime}_{ij}]_{1\leq i,j\leq (m-1)}$, whose diagonal entries are given by
\begin{equation*}
    m^{\prime}_{ii}=
    \begin{cases}
    \lambda+1- \left(\frac{k-1}{m}\right) & \text{ if } i= 1, \\
    n & \text{ if } 2\leq i\leq m-k,\\
    n-1  & \text{ if } m-k+1 \leq i\leq m-1.
    \end{cases}
\end{equation*} 
Since the determinant of an upper triangular matrix is the product of its diagonal entries, and all the above operations preserve the determinant, we have
\begin{align*}
\det(M_\lambda) = \det(M_\lambda^{\prime}) &= \left(\lambda+1- \left(\frac{k-1}{m}\right)\right)~ n^{m-k-1}~(n-1)^{k-1}\\
& =  m^{-1}~(m+n-k)~(n-1)^{k-1}~n^{m-k-1}.
\end{align*}
Thus, by Theorem~\ref{thm:modifiedMTT}, we obtain 
\[
|{\rm SPT}(G_{m+n}^{k};e)|  = \det (D_x)\cdot\det M_\lambda=  m^{n-2}~(m+n-k)~(n-1)^{k-1}~n^{m-k-1}.
\]
From equation \eqref{eq:SPTx}, it follows that
\[
|\mathfrak{T}_{m+n}^k|= |{\rm SPT}(G_{m+n}^{k};e)|= m^{n-2}~(m+n-k)~(n-1)^{k-1}~n^{m-k-1}.
\]
This completes the proof.
\end{proof}

We now note that $\mathfrak{T}_{m+n} =  \coprod_{k=1}^{m}  \mathfrak{T}_{m+n}^{k}$, and hence by \eqref{eq:SPT(K_mn)},
\begin{equation}\label{eq:forid2}
m^{n-1}~n^{m-1} = \sum_{k=1}^{m}m^{n-2}~(m+n-k)~(n-1)^{k-1}~n^{m-k-1}.    
\end{equation}


\section{\texorpdfstring{Refined enumeration and associated combinatorial identities}{Refined enumeration of uprooted spanning trees in Kn}}\label{sec:refinement}

In Section~\ref{sec:Kn}, we derived a formula for the total number of uprooted spanning trees of the complete graph $K_n$ rooted at a given vertex. In this section, we refine that enumeration by classifying the uprooted spanning trees based on the highest child of the root. 
Recall that $\mathcal{T}_{n-k}\subseteq \mathcal{U}_{K_{n}}$ denotes the set of all uprooted spanning trees of $K_n$ with root $n-k$.
For $1 \leq j \leq n-k-1$, let $\mathcal{T}_{n-k}^{j}\subseteq\mathcal{T}_{n-k}$ denote the subset consisting of those trees in which the highest child of the root $n-k$ is exactly $n-k-j$.
Then we have the disjoint union $\mathcal{T}_{n-k}=  \coprod_{j=1}^{n-k-1} \mathcal{T}_{n-k}^{j}$, and hence,
\begin{equation}\label{eq:Tj}
|\mathcal{T}_{n-k}|=  \sum\limits_{j=1}^{n-k-1} |\mathcal{T}_{n-k}^{j}|.
\end{equation}
The number of uprooted spanning trees in each subset $\mathcal{T}_{n-k}^{j}$ is given by the following result.
\begin{theorem}\label{Thm:refinement}
Let $\mathcal{T}_{n-k}^{j}\subseteq\mathcal{T}_{n-k}$ be the set of all uprooted spanning trees of $K_n$ with root $n-k$, such that the highest child of the root equals $n-k-j$ for $1\leq j\leq n-k-1$. Then, 
\[
|\mathcal{T}_{n-k}^{j}|= n^{n-k-j-2}~(n-1)^{k+j-2}~(2n-k-j-1).
\]
\end{theorem}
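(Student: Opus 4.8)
The plan is to follow the strategy used for Theorem~\ref{thm:Kmn}: realize $\mathcal{T}_{n-k}^{j}$ as the set of spanning trees of an explicit subgraph of $K_n$ that are required to contain one marked edge, and then apply the modified matrix tree theorem (Theorem~\ref{thm:modifiedMTT}).

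\emph{Reduction.} In any tree $T\in\mathcal{T}_{n-k}^{j}$ the root $n-k$ is adjacent to its highest child $n-k-j$ and is not adjacent to any vertex of $\{n-k-j+1,\dots,n-k-1\}\cup\{n-k+1,\dots,n\}$. This motivates setting
\[
G \;=\; K_n \setminus \bigl(\{e_{n-k,\,i}\colon n-k-j+1\le i\le n-k-1\}\cup\{e_{n-k,\,i}\colon n-k+1\le i\le n\}\bigr),
\]
and marking $e=\{n-k-j,\,n-k\}\in E(G)$. Note that $n-k\ge 2$ and $n-k-j\ge 1$ since $1\le j\le n-k-1$, so $e$ exists; moreover $G$ retains every edge of $K_n$ not incident to $n-k$, hence is connected. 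In $G$ the only neighbours of $n-k$ are $1,2,\dots,n-k-j$, all smaller than $n-k$; therefore rooting any spanning tree of $G$ at $n-k$ automatically gives an uprooted tree, and it lies in $\mathcal{T}_{n-k}^{j}$ precisely when it contains $e$ (which forces $n-k-j$, the largest neighbour of $n-k$ available in $G$, to be the highest child), and conversely every tree of $\mathcal{T}_{n-k}^{j}$ arises this way. Hence $|\mathcal{T}_{n-k}^{j}|=|{\rm SPT}(G;e)|$.

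\emph{The matrix and its determinant.} Deleting the row and column indexed by $n-k$ from $L_x(G;e)$ gives an $(n-1)\times(n-1)$ matrix all of whose off-diagonal entries equal $-1$ (every deleted edge of $G$ was incident to $n-k$), with diagonal entries $n-1$ on the $n-k-j-1$ coordinates $1,\dots,n-k-j-1$, the entry $n-2+x$ on the coordinate $n-k-j$, and $n-2$ on the remaining $k+j-1$ coordinates. Thus $\widetilde{L}_x(G;e)={\rm diag}(\delta_1,\dots,\delta_{n-1})-J$, where $J$ is the all-ones matrix and the multiset $\{\delta_i\}$ consists of $n$ with multiplicity $n-k-j-1$, the value $n-1+x$ once, and $n-1$ with multiplicity $k+j-1$. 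By the matrix determinant lemma applied to the rank-one perturbation $-J=\mathbf{1}(-\mathbf{1})^{\mathsf{T}}$,
\[
\det\widetilde{L}_x(G;e)=\Bigl(\prod_i \delta_i\Bigr)\Bigl(1-\sum_i \delta_i^{-1}\Bigr)=n^{\,n-k-j-1}(n-1)^{\,k+j-1}\bigl[(n-1+x)\,c-1\bigr],
\]
where $c=1-\dfrac{n-k-j-1}{n}-\dfrac{k+j-1}{n-1}$ does not involve $x$, since the summand $(n-1+x)^{-1}$ cancels against the factor $(n-1+x)$ of $\prod_i\delta_i$. Consequently the coefficient of $x$ is $n^{\,n-k-j-1}(n-1)^{\,k+j-1}c$, and combining fractions over the common denominator $n(n-1)$ gives $c=\dfrac{2n-k-j-1}{n(n-1)}$. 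By Theorem~\ref{thm:modifiedMTT} and the reduction above,
\[
|\mathcal{T}_{n-k}^{j}|=|{\rm SPT}(G;e)|=n^{\,n-k-j-2}(n-1)^{\,k+j-2}(2n-k-j-1).
\]
Alternatively, one can avoid the variable $x$ entirely: by \eqref{eq:xSPT}, $|{\rm SPT}(G;e)|=|{\rm SPT}(G)|-|{\rm SPT}(G\setminus e)|$, and both terms are determinants of $(n-1)\times(n-1)$ matrices with $-1$ off the diagonal, computed either by the matrix determinant lemma or by the row/column reduction used in the proof of Theorem~\ref{thm:Kn}.

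\emph{Main obstacle.} The linear algebra is routine once the matrix is recognized as a rank-one update of a diagonal matrix; the genuine points requiring care are the correctness of the bijective reduction — in particular that ${\rm SPT}(G;e)$ corresponds exactly to $\mathcal{T}_{n-k}^{j}$ — and the behaviour at the degenerate parameter values: $j=n-k-1$, where the first diagonal block is empty and $n^{0}=1$, and $k=0,\,j=1$, where $(n-1)^{k+j-2}=(n-1)^{-1}$ but is cancelled by the factor $2n-k-j-1=2(n-1)$, so that the stated closed form remains valid throughout the allowed range.
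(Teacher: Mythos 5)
Your proposal is correct and follows the same combinatorial reduction as the paper: you construct exactly the graph $G_{n-k}^{j}$ obtained by deleting from $K_n$ all edges joining $n-k$ to vertices larger than $n-k-j$ (other than $n-k$ itself), mark the edge $e=\{n-k-j,\,n-k\}$, and invoke Theorem~\ref{thm:modifiedMTT}. The only divergence is in evaluating $\det(\widetilde{L}_x(G;e))$: the paper runs a sequence of row/column operations and a Schur-complement step, whereas you recognize the matrix as ${\rm diag}(\delta_1,\dots,\delta_{n-1})-\mathbf{1}\mathbf{1}^{\mathsf T}$ and apply the matrix determinant lemma, which yields the coefficient of $x$ in one line and is arguably cleaner and less error-prone; your identification of the $\delta_i$ and the simplification of $c$ to $\frac{2n-k-j-1}{n(n-1)}$ both check out.
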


\begin{proof}
For any uprooted spanning tree in $\mathcal{T}_{n-k}^{j}$, the root $n-k$ is not adjacent to any vertex $u$ where $u>n-k$, nor to any vertex $v$ satisfying $n-k-j+1 \leq v \leq n-k-1$.
To encode this structural restriction, consider the graph $$G_{n-k}^{j}=K_n\setminus\{e_{n-k,v}\colon n-k-j+1\leq v \leq n, v\neq n-k\},$$ where $e_{u,v}$ denotes the edge $\{u,v\}\in E(K_{n})$.
Now, let $e\in E(G_{n-k}^{j})$ be the edge between the root $n-k$ and its highest child $n-k-j$, which must appear in every tree in $\mathcal{T}_{n-k}^{j}$. 
We assign a variable weight $x$ to this edge and denote the resulting graph, with the distinguished edge $e = \{n-k-j, n-k\} \in E(G_{n-k}^{j})$, as $(G_{n-k}^{j};e)$.
Note that for each $1 \leq j \leq n-k-1$, the degrees of the vertices in $G_{n-k}^{j}$ are given by
\begin{equation*}
    \deg_{G_{n-k}^{j}}(u)=
    \begin{cases}
    n-1  & \text{ if } u\in[n-k-j-1], \\
    n-2+x  & \text{ if } u= n-k-j,\\
    n-k-j-1+x  & \text{ if } u= n-k,\\
    n-2  & \text{ if } u\in \{n-k-j+1, \dots, n\} \setminus \{n-k\}.
    \end{cases}
\end{equation*} 

Choosing any spanning tree of $G_{n-k}^{j}$ that contains the edge $e$, and designating $n-k$ as the root, yields an uprooted spanning tree in $\mathcal{T}_{n-k}^{j}$.
Thus, we have
\begin{equation}\label{eq:SPTj}
|\mathcal{T}_{n-k}^{j}|= |{\rm SPT}(G_{n-k}^{j};e)|.    
\end{equation}
To compute the number of spanning trees of the graph $(G_{n-k}^{j};e)$, we now apply the modified matrix tree theorem (Theorem~\ref{thm:modifiedMTT}).
Let $L_x(G_{n-k}^{j};e)$ denotes the Laplacian matrix of the graph $(G_{n-k}^{j};e)$.
By deleting the $(n-k)$-th row and column from $L_x(G_{n-k}^{j};e)$, we obtain the reduced Laplacian matrix $\widetilde{L}_x(G_{n-k}^{j};e)$.
This matrix has all non-diagonal entries equal to $-1$, and the diagonal entries are given by the degrees of the corresponding vertices in the graph $(G_{n-k}^{j};e)$.
Let $R_i$ and $C_{i^{\prime}}$ denote the $i$-th row and $i^{\prime}$-th column of $(G_{n-k}^{j};e)$, respectively.
We now perform the following row and column operations on $L_x(G_{n-k}^{j};e)$ to simplify its structure.
\begin{enumerate}
    \item $R_{n-1} \longrightarrow R_{n-1} + \sum_{i=1}^{n-2} R_{i}$,
    \item $C_i \longrightarrow C_i - C_{n-1}$, for all $1\leq i\leq n-2$,
    \item $R_{n-1} \longrightarrow R_{n-1} - \frac{1}{n} R_i$, for each $1\leq i\leq n-k-j-1$,
    \item $C_{n-1} \longrightarrow C_{n-1} + \frac{1}{n} C_i$, for each $1\leq i\leq n-k-j-1$, and then
    \item $C_{n-1} \longrightarrow C_{n-1} + \frac{1}{n-1} C_i$, for each $n-k-j+1\leq i\leq  m-2$.
\end{enumerate}
After applying these operations in the stated order, the matrix $\widetilde{L}_x(G_{n-k}^{j};e)$ reduces to the following block matrix.
\[
\widetilde{L}_x(G_{n-k}^{j};e)^{\prime} = 
\begin{bNiceArray}{c|c}[margin,columns-width=auto]
  A^{\prime}_x & B^{\prime}_x \\
  \hline
 C^{\prime}_x & D^{\prime}_x
\end{bNiceArray}_{(n-1)\times(n-1)},
\]
where $A^{\prime}_x$ is the  $(n-k-j)\times(n-k-j)$ diagonal matrix ${\rm diag}[n ,\dots n,  , n-1+x]$,  
$B^{\prime}_x$ is an $(n-k-j)\times(k+j-1)$ matrix with all entries equal to zero except the $(n-k-j, k+j-1)$-th entry, which is equal to $-1$, 
$C^{\prime}_x$ is a $(k+j-1)\times (n-k-j)$ matrix with a single non-zero entry $x$ at its $(k+j-1,n-k-j)$-th position and 
$D^{\prime}_x$ is the $(k+j-1)\times (k+j-1)$ diagonal matrix ${\rm diag}\left[n-1, \dots ,n-1 ,\frac{n-k-j-1}{n}\right]$. 

Since all of the above row and column operations preserve the determinant, and using the Schur's formula for determinants of block matrices, we obtain $$\det(\widetilde{L}_{x}(G_{m+n}^{k};e))= \det (D^{\prime}_x)\cdot \det (A^{\prime}_x-B^{\prime}_x(D^{\prime}_{x})^{-1}C^{\prime}_x).$$
Since $D^{\prime}_x$ is diagonal, we compute $\det(D^{\prime}_x) = (n-1)^{k+j-2}\left(\frac{n-k-j-1}{n}\right)$, and its inverse is given by $(D^{\prime}_{x})^{-1}={\rm diag}[\frac{1}{n-1}, \dots ,\frac{1}{n-1} , \frac{n}{n-k-j-1}]$.
Further, 
$A^{\prime}_x-B^{\prime}_x(D^{\prime}_{x})^{-1}C^{\prime}_x =  {\rm diag}\left[n, \dots ,n ,(n-1)+\gamma x\right]$, where $\gamma = \frac{2n-k-j-1}{n-k-j-1}$.
Hence,
\[
\det(A^{\prime}_x-B^{\prime}_x(D^{\prime}_{x})^{-1}C^{\prime}_x) = n^{n-k-j-1}\left(n-1+\gamma x\right).
\]
Combining the above expressions, we obtain
\[
\det(\widetilde{L}_{x}(G_{m+n}^{k};e))=  \det (D^{\prime}_x) \left(n^{n-k-j-1}\left(n-1+\gamma x\right)\right).
\]
Therefore, by Theorem~\ref{thm:modifiedMTT}, the number of spanning trees of $(G_{n-k}^{j};e)$ that contain the edge $e$ is
$|{\rm SPT}(G_{n-k}^{j};e)|= \det (D^{\prime}_x) (\gamma ~ n^{n-k-j-1} ).$
Substituting the values of $\det (D^{\prime}_x)$ and $\gamma$, followed by \eqref{eq:SPTj}, we get
\[
|\mathcal{T}_{n-k}^{j}|= |{\rm SPT}(G_{n-k}^{j};e)| = n^{n-k-j-2}~ (n-1)^{k+j-2}~(2n-k-j-1).
\]
This concludes the proof.
\end{proof}

By the decomposition $\mathcal{T}_{n-k}=  \coprod_{j=1}^{n-k-1} \mathcal{T}_{n-k}^{j}$, we obtain the following identity.

\begin{proposition}\label{prop:refinement}
For $n\geq 2$ and $0\leq k\leq n-1$, let $\mathcal{T}_{n-k}$ be the set of all uprooted spanning trees of $K_n$ with root $n-k$. Then, as $|\mathcal{T}_{n-k}| = (n-1-k)~n^{n-2-k}~(n-1)^{k-1}$, we have
\begin{equation}\label{eq:prop5.2}
(n-1-k)~n^{n-2-k}~(n-1)^{k-1}= \sum_{j=1}^{n-k-1} n^{n-k-j-2}~ (n-1)^{k+j-2}~(2n-k-j-1).   
\end{equation}
\end{proposition}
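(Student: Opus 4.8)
The plan is to deduce \eqref{eq:prop5.2} directly from the enumerative results already in hand, and then, for completeness, to indicate how the same identity can be checked by a short algebraic computation.

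First, the combinatorial route. The key fact, recorded in \eqref{eq:Tj}, is that the decomposition $\mathcal{T}_{n-k} = \coprod_{j=1}^{n-k-1}\mathcal{T}_{n-k}^{j}$ gives $|\mathcal{T}_{n-k}| = \sum_{j=1}^{n-k-1}|\mathcal{T}_{n-k}^{j}|$. I would then substitute the closed form $|\mathcal{T}_{n-k}| = (n-1-k)\,n^{n-2-k}\,(n-1)^{k-1}$ from Theorem~\ref{thm:Kn} on the left-hand side and $|\mathcal{T}_{n-k}^{j}| = n^{n-k-j-2}\,(n-1)^{k+j-2}\,(2n-k-j-1)$ from Theorem~\ref{Thm:refinement} on the right-hand side. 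This produces \eqref{eq:prop5.2} verbatim, and the proof is complete. On this side there is genuinely no obstacle: all the work was already done in the two cited theorems, and the present statement is just their numerical shadow.

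For an independent algebraic check I would set $N = n-k-1$, so the sum runs over $1 \le j \le N$, rewrite $2n-k-j-1 = n + N - j$, and cancel the common factor $(n-1)^{k-1}$ from both sides; the extreme case $k = n-1$ makes the sum empty and both sides zero, and any negative exponent that surfaces for a small value of $k$ is read off exactly as in Theorems~\ref{thm:Kn} and~\ref{Thm:refinement}, so no boundary case causes trouble. The identity then reduces to
\[
\sum_{j=1}^{N} n^{\,N-1-j}(n-1)^{\,j-1}(n+N-j) \;=\; N\,n^{\,N-1}.
\]
Writing $a = n$, $b = n-1$ (so $a-b = 1$) and splitting $n + N - j = a + (N-j)$ breaks the left-hand side into $\sum_{j=1}^{N} a^{N-j}b^{j-1}$ and $\sum_{j=1}^{N}(N-j)\,a^{N-1-j}b^{j-1}$. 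The first sum is the finite geometric series $\sum_{i=0}^{N-1}a^{N-1-i}b^{i} = \tfrac{a^{N}-b^{N}}{a-b} = a^{N}-b^{N}$; the second, after reindexing by $\ell = N-1-i$, equals $\sum_{\ell=0}^{N-1}\ell\,a^{\ell-1}b^{N-1-\ell}$, which is the $a$-derivative of $\sum_{\ell=0}^{N-1}a^{\ell}b^{N-1-\ell} = \tfrac{a^{N}-b^{N}}{a-b}$ and hence, using $a-b=1$, equals $N a^{N-1} - (a^{N}-b^{N})$. Adding the two pieces gives $N a^{N-1}$, confirming the identity. The only mild care needed is in the differentiated geometric series and the degenerate values of $k$; neither is a real difficulty, so I expect the whole proof to be short.
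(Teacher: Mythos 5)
Your first paragraph is exactly the paper's proof: Proposition~\ref{prop:refinement} follows by substituting the formulas of Theorem~\ref{thm:Kn} and Theorem~\ref{Thm:refinement} into the decomposition \eqref{eq:Tj}, and nothing more is needed. Your supplementary algebraic check (via the geometric series and its derivative, with $a-b=1$) is correct and is a pleasant direct alternative to the paper's later inductive verification of the equivalent identity \eqref{eq:rmrk3} in Proposition~\ref{prop:identities}, but it is not required for this proposition.
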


\begin{proof}
The proof follows directly from \eqref{eq:Tj} and Theorem~\ref{Thm:refinement}.
\end{proof}

Using Proposition~\ref{prop:refinement}, we obtain the following refinement of identity~\eqref{eq:identity1}
\begin{equation}\label{eq:sumrefine}
(n-1)^{n-1}= \sum_{k=0}^{n-1}~\sum_{j=1}^{n-k-1} n^{n-k-j-2}~ (n-1)^{k+j-2}~(2n-k-j-1).    
\end{equation}

We now summarize the combinatorial identities obtained in this paper and present their equivalent forms.
Dividing both sides of the identity~\eqref{eq:identity1} by $(n-1)^{n-2}$, we obtain the following equivalent identity
\begin{equation}\label{eq:rmrk1}
n-1= \sum_{k=0}^{n-1}\left(\frac{n}{n-1}\right)^{n-2-k}\left(1-\frac{k}{n-1}\right),\text{ for } n\geq 2.    
\end{equation}
Similarly, dividing both sides of identity~\eqref{eq:identity2} by $m^{n-2}~n^{m-1}$, we obtain an equivalent form of \eqref{eq:identity2} as
\begin{equation}\label{eq:rmrk2}
m= \sum_{j=1}^{m}\left(\frac{n-1}{n}\right)^{j-1}\left(\frac{m+n-j}{n}\right),\text{ for } n\geq 2,\ m\geq 1.        
\end{equation}
Next, we consider the identity~\eqref{eq:prop5.2}.
On dividing both sides by $n^{n-k-2}~(n-1)^{k-1}$, we obtain
\begin{equation}\label{eq:rmrk3}
n-1-k = \sum_{j=1}^{n-k-1}\left(\frac{n-1}{n}\right)^{j-1}\left(\frac{2n-k-j-1}{n}\right),\text{ for } n\geq 2 \text{ and } 0\leq k\leq n-1.        
\end{equation}
By setting $m = n - 1 - k$, we see that identity~\eqref{eq:rmrk3} is equivalent to identity~\eqref{eq:rmrk2}.
Both identities \eqref{eq:rmrk1} and \eqref{eq:rmrk2} can be easily verified by induction.

From these observations, we state the following proposition which collects the key identities in simplified form.

\begin{proposition}\label{prop:identities}
We have the following combinatorial identities:
\begin{enumerate}[label=(\arabic*)]
    \item \label{1} For $n \geq 2$,  
    \[
    n-1 = \sum_{k=0}^{n-1} \left(\frac{n}{n-1}\right)^{n-2-k} \left(1 - \frac{k}{n-1}\right).
    \]
    
    \item \label{2} For $n \geq 2$, $m \geq 1$,  
    \[
    m = \sum_{j=1}^{m} \left(\frac{n-1}{n}\right)^{j-1} \left(\frac{m+n-j}{n}\right).
    \]
\end{enumerate}   
\end{proposition}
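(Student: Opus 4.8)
The plan is to prove the two identities in Proposition~\ref{prop:identities} directly by induction, independently of the spanning-tree machinery, since the paper explicitly flags that ``both identities can be easily verified by induction.'' Both statements are, after clearing denominators, polynomial identities in the relevant parameters, so a routine inductive argument should close each one.

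For part~\ref{1}, I would set
\[
S(n) = \sum_{k=0}^{n-1} \left(\frac{n}{n-1}\right)^{n-2-k} \left(1 - \frac{k}{n-1}\right)
\]
and aim to show $S(n) = n-1$. The base case $n = 2$ is a one-term sum equal to $1$. For the inductive step, rather than relating $S(n)$ to $S(n-1)$ directly (the ratio $n/(n-1)$ changes with $n$, which makes a naive telescoping awkward), I would re-index by $i = n-1-k$ so the sum becomes $\sum_{i=0}^{n-1} \left(\tfrac{n}{n-1}\right)^{i-1}\cdot\tfrac{i}{n-1}$, i.e. a weighted geometric sum $\tfrac{1}{n}\sum_{i=0}^{n-1} i\, q^{i}$ with $q = n/(n-1)$. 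Then I would apply the standard closed form for $\sum_{i=0}^{N} i q^i = \dfrac{q\bigl(1 - (N+1)q^{N} + N q^{N+1}\bigr)}{(1-q)^2}$ with $N = n-1$, substitute $q = n/(n-1)$ so that $1 - q = -\tfrac{1}{n-1}$ and $(1-q)^2 = \tfrac{1}{(n-1)^2}$, and simplify. Since this turns everything into an explicit rational expression, no induction is strictly needed for part~\ref{1}; the evaluation is direct once the geometric-series formula is invoked.

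For part~\ref{2}, by the remark in the paper that~\eqref{eq:rmrk2} and~\eqref{eq:rmrk3} are equivalent under $m = n-1-k$, it suffices to prove the single identity
\[
m = \sum_{j=1}^{m} \left(\frac{n-1}{n}\right)^{j-1} \left(\frac{m+n-j}{n}\right).
\]
Here I would again convert to a combination of a plain geometric sum $\sum_{j=1}^{m} r^{j-1}$ and a weighted one $\sum_{j=1}^{m} (j-1) r^{j-1}$ with $r = (n-1)/n$, writing $m+n-j = (m+n-1) - (j-1)$. Using $1 - r = 1/n$ gives $\sum_{j=1}^m r^{j-1} = n(1 - r^m)$ and the weighted sum has an analogous explicit form; collecting the $r^m$ terms, the factor $n\cdot r^m = n\cdot\tfrac{(n-1)^m}{n^m}$ appears, and one checks the coefficient of $r^m$ vanishes, leaving exactly $m$. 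Alternatively, a clean two-line induction on $m$ works: assuming the formula for $m-1$, add the new $j = m$ term and the effect of replacing $m-1$ by $m$ inside each existing summand, and verify the increment equals $1$; this is the route I would actually write up since it is shortest.

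The only mild obstacle is bookkeeping: one must be careful with the index shifts (summation from $k=0$ versus $j=1$) and with the sign of $1-q$, which is negative in part~\ref{1} because $q = n/(n-1) > 1$. Once the sums are written in the normalized geometric form this is entirely mechanical, and I expect the whole proof to fit in under a page.
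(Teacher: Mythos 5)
Your proposal is correct, and for part (1) it takes a genuinely different route from the paper. The paper handles the first identity by introducing an auxiliary real parameter $a$ and proving the more general statement
$\sum_{k=0}^{n-1}\left(\frac{a}{a-1}\right)^{n-2-k}\left(1-\frac{k}{a-1}\right)=(n-1)+\frac{a-n}{a}$
by induction on $n$, then specializing $a=n$; this sidesteps exactly the obstacle you identified, namely that the base $n/(n-1)$ changes with $n$ and so a naive induction does not telescope. Your alternative --- re-indexing to $\frac{1}{n}\sum_{i=0}^{n-1} i\,q^{i}$ with $q=n/(n-1)$ and invoking the closed form for the arithmetico-geometric sum --- avoids induction altogether, and the computation does close: with $N=n-1$ one finds $1-nq^{n-1}+(n-1)q^{n}=1$, so the sum equals $q(n-1)^2=n(n-1)$ and $S(n)=n-1$. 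What the paper's generalization buys is a family of identities in $a$ (which the authors flag in a later remark as possibly having their own combinatorial meaning); what your direct evaluation buys is a shorter, self-contained verification with no need to guess the correct inductive generalization. For part (2) your preferred write-up (induction on $m$, checking that the increment $F(m)-F(m-1)=\frac{1}{n}\sum_{j=1}^{m-1}r^{j-1}+r^{m-1}=1$ with $r=(n-1)/n$) is essentially the paper's approach, and your alternative direct evaluation also checks out since the coefficient of $r^{m}$ indeed vanishes.
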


\begin{proof}
We first prove the initial identity by induction on $n$.  
Here, we introduce an auxiliary real parameter $a \in \mathbb{R}\setminus\{0,1\}$ and consider the more general identity
\begin{equation}\label{eq:induction_a}
 \sum_{k=0}^{n-1} \left(\frac{a}{a-1}\right)^{n-2-k}\left(1-\frac{k}{a-1}\right)
 = (n-1) + \left(\frac{a-n}{a}\right), \qquad \forall\  a\in \mathbb{R}\setminus\{0,1\}.
\end{equation}
We verify \eqref{eq:induction_a} by induction on $n$, and upon substituting $a=n$, we obtain the desired identity for all integers $n\geq 2$.

The second identity can be proved directly by induction on $m$, without introducing any auxiliary parameter.
\end{proof}

\begin{remark}
Using the first identity in Proposition~\ref{prop:identities} together with Theorem~\ref{thm:Kn}, one recovers the formula $|\mathcal{U}_{K_{n}}| = (n-1)^{n-1}$ for the number of uprooted spanning trees of the complete graph $K_n$, as an application of the matrix tree theorem.

In the following section, the matrix tree theorem will again be used to compute the number of uprooted spanning trees of another graph obtained from $K_n$.
\end{remark}


\section{\texorpdfstring{Another application of the matrix tree theorem}{Another application of matrix tree theorem}}\label{sec:more_applications}

In this section, we apply the matrix tree theorem to enumerate uprooted spanning trees of certain graphs derived from the complete graph $K_{n}$.
In particular, we consider the graph $K_{n}\setminus \{e_{1,n}\}$ obtained by deleting the edge $\{1,n\}\in E(K_{n})$, and compute the corresponding number of uprooted spanning trees.

For $n\geq 3$, let $\mathcal{U}_{K_n}^{\prime}$ denote the set of all uprooted spanning trees of the graph $K_{n}\setminus \{e_{1,n}\}$, where $e_{1,n}$ denotes the edge $\{1,n\}\in E(K_n)$. 
Kumar et al.~\cite{CGS} established that
\begin{equation}\label{eq:kn-e}
|\mathcal{U}_{K_n}^{\prime}|  = (n-1)^{n-3}~(n-2)^{2}.
\end{equation}
The proof in~\cite{CGS} is quite involved and requires many detailed counting steps.
In this section, we obtain the same result more directly as an application of the matrix tree theorem.

Let $\mathcal{T}^{\prime}_{r}\subseteq\mathcal{U}_{K_n}^{\prime}$ denote the set of all uprooted spanning trees of $K_n\setminus\{e_{1,n}\}$ with root $r$.
For $n\geq 3$ and $0\leq k\leq n-2$, we have $\mathcal{U}_{K_n}^{\prime}= \coprod_{k=0}^{n-2} \mathcal{T}^{\prime}_{n-k}$.
Hence,
\begin{equation}\label{eq:sec5_sum}
|\mathcal{U}_{K_n}^{\prime}|= \sum_{k=0}^{n-2} \mathcal{T}^{\prime}_{n-k}.
\end{equation}
In what follows, we focus on computing $|\mathcal{T}_{r}^{\prime}|$ by applying the matrix tree theorem.
 
\begin{theorem}\label{thm:Kn/e}
For $n\geq 3$, let $\mathcal{T}^{\prime}_{r}\subseteq \mathcal{U}^{\prime}_{K_n}$ be the set of all the uprooted spanning trees of $K_n\setminus\{e_{1,n}\}$ with root $r$. Then we have
\[
|\mathcal{T}^{\prime}_{n}|=n^{n-3}~(n-2), 
\]
and for $1\leq k\leq n-2$,
\[
|\mathcal{T}^{\prime}_{n-k}|=n^{n-2-k}~(n-1)^{k-1}\left((n-3-k)+\frac{2nk+n-k-2}{n(n-1)}\right).
\]
\end{theorem}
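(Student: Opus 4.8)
The plan is to mimic closely the strategy of Theorems~\ref{thm:Kn} and \ref{thm:Kmn}: reduce each count $|\mathcal{T}^{\prime}_{n-k}|$ to a spanning-tree count of an auxiliary graph, and then evaluate the relevant (reduced) Laplacian determinant by explicit row/column operations. For the root $r=n$, an uprooted spanning tree with root $n$ is just any spanning tree of $K_n\setminus\{e_{1,n}\}$ rooted at $n$, so $|\mathcal{T}^{\prime}_n|=|{\rm SPT}(K_n\setminus\{e_{1,n}\})|$; this is $\det(\widetilde{L}(K_n\setminus\{e_{1,n}\}))$ with the row/column of $n$ deleted, an $(n-1)\times(n-1)$ matrix that differs from the reduced Laplacian of $K_n$ only in that the $(1,1)$ diagonal entry drops from $n-1$ to $n-2$ (vertex $1$ loses one neighbour, namely $n$). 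A single cofactor expansion, or the rank-one update $\det(\widetilde{L}(K_n)-E_{11})=\det(\widetilde{L}(K_n))-\det(\text{minor deleting row/col }1)=n^{n-2}-(n-1)^{n-3}\cdot(\text{something})$, cleanly yields $n^{n-3}(n-2)$; I would just run the same $C_1\to C_1+\sum_{i\ge 2}C_i$ trick used in Theorem~\ref{thm:Kn} to triangularize directly.

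For $1\le k\le n-2$, I would proceed exactly as in the proof of Theorem~\ref{Thm:refinement}. A tree in $\mathcal{T}^{\prime}_{n-k}$ has root $n-k$, which is not adjacent to any vertex $i>n-k$ (uprooted condition); moreover the ambient graph is $K_n\setminus\{e_{1,n}\}$, so the edge $\{1,n\}$ is absent regardless. Define
\[
H_{n-k}=\bigl(K_n\setminus\{e_{1,n}\}\bigr)\setminus\{e_{n-k,i}\colon n-k+1\le i\le n\},
\]
so that $|\mathcal{T}^{\prime}_{n-k}|=|{\rm SPT}(H_{n-k})|=\det(\widetilde{L}(H_{n-k}))$, deleting the row/column of the root $n-k$. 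This reduced Laplacian is $(n-1)\times(n-1)$ with all off-diagonal entries $-1$ except the $(1,n)$ and $(n,1)$ positions which are $0$, and diagonal entries $n-1$ for the vertices $2,\dots,n-k-1$, $n-2$ for the vertices $n-k+1,\dots,n$, and $n-2$ for vertex $1$ (it loses the neighbour $n$). So compared with the matrix $\widetilde{L}(G_{n-k})$ of Theorem~\ref{thm:Kn} there are two perturbations: the $(1,1)$ entry is lowered by $1$, and the $(1,n)$, $(n,1)$ entries are raised by $1$. I would handle this by the same block of operations as in Theorem~\ref{thm:Kn} (steps $C_1\to C_1+\sum C_i$, then subtract the root-type row, etc.), now carefully tracking the extra contributions coming from vertex $1$ and vertex $n$, or alternatively by a rank-two update formula $\det(\widetilde{L}(G_{n-k})+U)$ where $U$ has the four altered entries: $\det=\det(\widetilde{L}(G_{n-k}))\cdot\det(I+\widetilde{L}(G_{n-k})^{-1}U)$, expanding the $2\times 2$ determinant on the right using the known inverse of the ``all $-1$ off-diagonal, two distinct diagonal blocks'' matrix (which is itself a rank-one perturbation of a diagonal matrix, invertible by Sherman--Morrison). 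Either route terminates in the closed form $n^{n-2-k}(n-1)^{k-1}\bigl((n-3-k)+\tfrac{2nk+n-k-2}{n(n-1)}\bigr)$.

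The main obstacle is bookkeeping rather than conceptual: the two ``defect'' positions $(1,1)$ and $(1,n)/(n,1)$ break the clean block-triangular structure that made Theorems~\ref{thm:Kn} and \ref{thm:Kmn} slick, and vertex $1$ and vertex $n$ play genuinely different roles (one has an altered diagonal, the other an altered off-diagonal adjacent to the missing edge), so the row/column operations must be sequenced to isolate both without introducing fractions that later fail to cancel. I expect the cleanest write-up uses the Schur-complement/matrix-determinant-lemma approach: write $\widetilde{L}(H_{n-k})=\Delta - \mathbf{1}\mathbf{1}^{\top} + (\text{correction supported on rows/cols }1,n)$ where $\Delta$ is diagonal, compute $(\Delta-\mathbf{1}\mathbf{1}^{\top})^{-1}$ once via Sherman--Morrison, and then evaluate a $2\times 2$ determinant. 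The final arithmetic simplification---checking that the messy rational expression collapses to the stated $(n-3-k)+\frac{2nk+n-k-2}{n(n-1)}$, and that it reduces to the $k=0$ value $n^{n-3}(n-2)$ consistently---is the part requiring the most care, and I would verify it against small cases $n=3,4$ before trusting the general formula.
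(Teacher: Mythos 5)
Your overall strategy is exactly the paper's: for the root $n$ reduce to $|{\rm SPT}(K_n\setminus\{e_{1,n}\})|$ and triangularize the reduced Laplacian, and for $1\leq k\leq n-2$ pass to the auxiliary graph $H_{n-k}=K_n\setminus\{e_{1,n},\,e_{n-k,i}\colon n-k+1\leq i\leq n\}$ (the paper's $G^{\prime}_{n-k}$), delete the row and column of the root, and evaluate the determinant by row/column operations or a Schur complement. The root-$n$ case is handled correctly.

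However, in the case $1\leq k\leq n-2$ your description of $\widetilde{L}(H_{n-k})$ contains a concrete error that would derail the computation. You record the diagonal entry of vertex $n$ as $n-2$, but in $H_{n-k}$ vertex $n$ loses \emph{two} edges -- the edge to vertex $1$ (since $e_{1,n}$ is deleted) and the edge to vertex $n-k$ (by the uprootedness restriction) -- so its degree, hence its diagonal entry, is $n-3$. Equivalently, relative to the matrix $\widetilde{L}(G_{n-k})$ of Theorem~\ref{thm:Kn}, deleting the edge $\{1,n\}$ is the symmetric \emph{rank-one} perturbation $-(e_1-e_{n-1})(e_1-e_{n-1})^{\top}$, which lowers \emph{both} diagonal entries at positions $1$ and $n-1$ by $1$ and raises the two off-diagonal entries $(1,n-1)$, $(n-1,1)$ to $0$; your ``two perturbations'' (lower $(1,1)$ only, raise the off-diagonals) describe a different matrix, and its determinant does not equal $|{\rm SPT}(H_{n-k})|$. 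With the corrected matrix -- first diagonal entry $n-2$, last diagonal entry $n-3$, entries $n-1$ for positions $2,\dots,n-k-1$ and $n-2$ for positions $n-k,\dots,n-2$, all off-diagonals $-1$ except the $(1,n-1)$ and $(n-1,1)$ entries which are $0$ -- either your Sherman--Morrison/matrix-determinant-lemma route or the paper's sequence of row and column operations does produce the stated value $n^{n-2-k}(n-1)^{k-1}\bigl((n-3-k)+\tfrac{2nk+n-k-2}{n(n-1)}\bigr)$. The fix is small, but as written the key matrix is wrong, so the determinant computation you propose would not close.
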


\begin{proof}
Let us consider $\mathcal{T}^{\prime}_{n}$, the set of all uprooted spanning trees of $K_n\setminus\{e_{1,n}\}$ with root $n$. 
Choosing any spanning tree of $K_n\setminus\{e_{1,n}\}$ and designating $n$ as the root yields in an uprooted spanning tree of $K_n\setminus\{e_{1,n}\}$ with root $n$. 
Therefore, we obtain 
\begin{equation}\label{eq:sec5a}
      |\mathcal{T}^{\prime}_{n}|=|{\rm SPT}(K_n\setminus\{e_{1,n}\})|.
\end{equation}

To determine $|{\rm SPT}(K_n\setminus\{e_{1,n}\})|$, we apply the matrix tree theorem.
We obtain the reduced Laplacian matrix of $K_n\setminus\{e_{1,n}\}$ by deleting the $n$-th row and column from the Laplacian matrix $L(K_n\setminus\{e_{1,n}\})$.
The resulting reduced Laplacian matrix $\widetilde{L}(K_n\setminus\{e_{1,n}\})$ is an $(n-1)\times (n-1)$ matrix in which the first diagonal entry is $n-2$, all remaining diagonal entries are $n-1$, and all non-diagonal entries are $-1$.

Let $R_{i}$ denote the $i$-th row and $C_{j}$ denote the $j$-th column of the matrix $\widetilde{L}(K_n\setminus\{e_{1,n}\})$.
Let us perform the following series of row and column operations on $\widetilde{L}(K_n\setminus\{e_{1,n}\})$.
\begin{enumerate}
    \item $C_1 \longrightarrow C_1 + \sum_{i=2}^{n-1} C_{i}$,
    \item $R_i \longrightarrow R_i - R_{1}$, for all $ 2\leq i\leq  n-1$,
    \item $C_1 \longrightarrow C_1 -\frac{1}{n} C_i$, for each $2\leq i\leq  n-1$.
\end{enumerate}
After applying these operations in the given order, the matrix $\widetilde{L}(K_n\setminus\{e_{1,n}\})$ is reduced to an upper triangular matrix with the first diagonal entry $\frac{n-2}{n}$, all remaining diagonal entries equal to $n$.
Since the determinant of an upper triangular matrix is the product of its diagonal entries, and all the above operations preserve the determinant, we have
\[{\rm det}(\widetilde{L}(K_n\setminus\{e_{1,n}\}))=n^{n-2}~\bigg(\frac{n-2}{n}\bigg) = n^{n-3}~(n-2).\]
By the matrix tree theorem, the number of spanning trees of $K_n\setminus\{e_{1,n}\}$ is  
\begin{equation}\label{eq:sec5b}
     |{\rm SPT}(K_n\setminus\{e_{1,n}\})| = {\rm det}(\widetilde{L}(K_n\setminus\{e_{1,n}\}))=n^{n-3}~(n-2).
\end{equation}
Thus, from \eqref{eq:sec5a} and \eqref{eq:sec5b}, we conclude 
\begin{align*}
|\mathcal{T}^{\prime}_{n}|=|{\rm SPT}(K_n\setminus\{e_{1,n}\})| = n^{n-3}~(n-2).
\end{align*}

Now, consider $\mathcal{T}^{\prime}_{n-k}$, the set of all uprooted spanning trees of $K_n\setminus\{e_{1,n}\}$ with root $n-k$, where $1\leq k\leq n-1$.
For any tree $T\in\mathcal{T}^{\prime}_{n-k}$, observe that, in addition to the condition that vertex $1$ is not adjacent to vertex $n$, the root $n-k$ is not adjacent to any vertex $i$ where $i > n-k$. 
To reflect this, we define the graph $$G^{\prime}_{n-k}= K_n\backslash \{e_{1,n},e_{n-k,i}\colon n-k+1\leq i\leq n\},$$ where $e_{u,v}$ represents the edge $\{u,v\}\in E(K_n)$.
By construction, $G^{\prime}_{n-k}$ ensures that vertex $1$ is not adjacent to vertex $n$, and vertex $n-k$  is not adjacent to any vertex  $i$ for $ n-k+1\leq i\leq n$.
Thus, choosing any spanning tree of $G^{\prime}_{n-k}$ and designating $n-k$ as the root gives an uprooted spanning tree of $K_n\setminus\{e_{1,n}\}$ with root $n-k$. 
Therefore, we obtain 
\begin{equation}\label{eq:sec5c}
      |\mathcal{T}^{\prime}_{n-k}|=|{\rm SPT}(G^{\prime}_{n-k})|, \ \text{ for all } 1\leq k\leq n-2.
\end{equation}

We now determine $|{\rm SPT}(G^{\prime}_{n-k})|$ using the matrix tree theorem.
The reduced Laplacian matrix of $G^{\prime}_{n-k}$ is obtained by deleting the $(n-k)$-th row and column from the Laplacian matrix $L(G^{\prime}_{n-k})$.
The resulting reduced Laplacian matrix $\widetilde{L}(G^{\prime}_{n-k})$ is an $(n-1)\times (n-1)$ matrix in which the first diagonal entry is $n-2$, 
the last diagonal entry is $n-3$,
the $i$-th diagonal entry equals $n-1$ for $2 \leq i \leq n-k-1$ and $n-2$ for $n-k \leq i \leq n-2$, while all non-diagonal entries are $-1$ except for the $(1,n-1)$-th and $(n-1,1)$-th entries, which are $0$.

Let $R_{i}$ and $C_{j}$ denote the $i$-th row and $j$-th column of the matrix $\widetilde{L}(G^{\prime}_{n-k})$, respectively.
We now perform the following sequence of row and column operations on $\widetilde{L}(G^{\prime}_{n-k})$.
\begin{enumerate}
    \item $C_1 \longrightarrow C_1 + \sum_{i=2}^{n-1} C_{i}$,
    \item $R_i \longrightarrow R_i - R_{1}$, for all $ 2\leq i\leq  n-k-1$,
    \item $C_i \longrightarrow C_i + C_1$, for all $2\leq i\leq  n-1$,
    \item $R_i \longrightarrow R_i - R_{n-1}$, for all $ n-k\leq i\leq  n-2$,
    \item $R_{n-1} \longrightarrow R_{n-1} +\frac{1}{n} R_{i}$, for all $ 2\leq i\leq  n-k-1$, and then
    \item $R_{n-1} \longrightarrow R_{n-1} + \frac{1}{n-1}R_{j}$, for all $n-k\leq j\leq  n-2$.
\end{enumerate}
After applying the above operations in the given order, the matrix $\widetilde{L}(G^{\prime}_{n-k})$ reduces to an upper triangular matrix whose first diagonal entry is $1$, 
the last diagonal entry is $(n-3-k) +\frac{2nk+n-k-2}{n(n-1)}$,
the $i$-th diagonal entry equals $n$ for $2 \leq i \leq n-k-1$ and equals $n-1$ for $n-k \leq i \leq n-2$.
Since the determinant of an upper triangular matrix is the product of its diagonal entries, and all the above operations preserve the determinant, we obtain
\[{\rm det}(\widetilde{L}(G^{\prime}_{n-k}))=n^{n-k-2}~(n-1)^{k-1}~\bigg((n-3-k)+\frac{2nk+n-k-2}{n(n-1)}\bigg).\]
By the matrix tree theorem, we have
\begin{equation}\label{eq:sec5d}
     |{\rm SPT}(G^{\prime}_{n-k})| = {\rm det}(\widetilde{L}(G^{\prime}_{n-k}))=n^{n-k-2}~(n-1)^{k-1}~\bigg((n-3-k)+\frac{2nk+n-k-2}{n(n-1)}\bigg).
\end{equation}
Hence, from \eqref{eq:sec5c} and \eqref{eq:sec5d}, we have, for all $1\leq k\leq n-2$,
\begin{align*}
|\mathcal{T}^{\prime}_{n-k}|=|{\rm SPT}(G^{\prime}_{n-k})| = n^{n-k-2}~(n-1)^{k-1}~\bigg((n-3-k)+\frac{2nk+n-k-2}{n(n-1)}\bigg). 
\end{align*}
This concludes our proof.
\end{proof}

By the decomposition $\mathcal{U}_{K_n}^{\prime}= \coprod_{k=0}^{n-2} \mathcal{T}^{\prime}_{n-k}$, we obtain 
\begin{equation}\label{eq:prop_sec05}
|\mathcal{U}_{K_n}^{\prime}|=n^{n-3}~(n-2)+\sum_{k=1}^{n-2} n^{n-2-k}~(n-1)^{k-1}\left((n-3-k)+\frac{2nk+n-k-2}{n(n-1)}\right),   
\end{equation}
which follows directly from \eqref{eq:sec5_sum} and Theorem~\ref{thm:Kn/e}.

%
%

\begin{proposition}\label{prop:identity_Kn-e}
For $n\geq 3$, the following combinatorial identity holds:
\begin{equation}\label{eq:rmrkSec5}
n-2 = \left(\frac{n}{n-1}\right)^{n-3} + \sum_{k=1}^{n-2} \left(\frac{n}{n-1}\right)^{n-2-k} \left( \frac{n-3-k}{n-2} + \frac{2nk+n-k-2}{n(n-1)(n-2)} \right).
\end{equation}
\end{proposition}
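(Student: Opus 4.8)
The plan is to derive \eqref{eq:rmrkSec5} directly from the enumerative identity \eqref{eq:prop_sec05}, which in turn follows from Theorem~\ref{thm:Kn/e} and the decomposition $\mathcal{U}_{K_n}^{\prime}= \coprod_{k=0}^{n-2} \mathcal{T}^{\prime}_{n-k}$. Since we already know $|\mathcal{U}_{K_n}^{\prime}| = (n-1)^{n-3}(n-2)^2$ from \eqref{eq:kn-e}, the identity amounts to nothing more than substituting this value into \eqref{eq:prop_sec05} and dividing through by $(n-1)^{n-3}(n-2)$.

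First I would recall \eqref{eq:prop_sec05}, namely
\[
(n-1)^{n-3}(n-2)^2 = n^{n-3}(n-2) + \sum_{k=1}^{n-2} n^{n-2-k}(n-1)^{k-1}\left((n-3-k)+\frac{2nk+n-k-2}{n(n-1)}\right),
\]
where the left-hand side has been replaced using \eqref{eq:kn-e}. Next I would divide both sides by $(n-1)^{n-3}(n-2)$. On the left this gives $n-2$. The first term on the right becomes $\frac{n^{n-3}}{(n-1)^{n-3}} = \left(\frac{n}{n-1}\right)^{n-3}$. For the $k$-th summand, the ratio of powers is $\frac{n^{n-2-k}(n-1)^{k-1}}{(n-1)^{n-3}} = \frac{n^{n-2-k}}{(n-1)^{n-2-k}} = \left(\frac{n}{n-1}\right)^{n-2-k}$, and the remaining factor $\left((n-3-k)+\frac{2nk+n-k-2}{n(n-1)}\right)$ gets divided by $n-2$, producing exactly $\frac{n-3-k}{n-2} + \frac{2nk+n-k-2}{n(n-1)(n-2)}$. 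Assembling these pieces yields \eqref{eq:rmrkSec5}.

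There is essentially no obstacle here: the work is purely a bookkeeping exercise of matching exponents and clearing the common factor $(n-1)^{n-3}(n-2)$. The only point requiring a word of care is confirming that \eqref{eq:kn-e} holds, but this is quoted from \cite{CGS} and hence may be assumed; alternatively, one could note that this very computation, read in reverse, provides an independent matrix-tree-theorem derivation of \eqref{eq:kn-e}. I would therefore present the proof as a short paragraph: state \eqref{eq:prop_sec05}, substitute \eqref{eq:kn-e} on the left, divide by $(n-1)^{n-3}(n-2)$, and simplify term by term to obtain \eqref{eq:rmrkSec5}.
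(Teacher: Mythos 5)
Your derivation is arithmetically correct and does prove the stated identity, but it takes a genuinely different route from the paper, and the difference matters for the logic of the surrounding section. The paper proves \eqref{eq:rmrkSec5} self-containedly, by induction on $n$ via the more general identity \eqref{eq:induction_a1} with an auxiliary real parameter $a$ (then setting $a=n$); it deliberately does \emph{not} use the tree count $|\mathcal{U}_{K_n}^{\prime}| = (n-1)^{n-3}(n-2)^{2}$ from \cite{CGS}. The reason is that the paper then turns around and combines the proposition with \eqref{eq:prop_sec05} to \emph{recover} that formula, advertising this as an alternative, matrix-tree-theorem proof of the result of Kumar et al. Your argument instead takes \eqref{eq:kn-e} as an input, substitutes it into \eqref{eq:prop_sec05}, and divides by $(n-1)^{n-3}(n-2)$ --- a perfectly valid proof of the proposition as a standalone statement, and considerably shorter, but one that makes the section's subsequent ``recovery'' of \eqref{eq:kn-e} circular. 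Your closing remark that the computation ``read in reverse'' gives an independent derivation of \eqref{eq:kn-e} does not rescue this: the reverse reading requires an independent proof of \eqref{eq:rmrkSec5} (which is exactly what the paper's induction supplies), and you cannot simultaneously use \eqref{eq:kn-e} to prove the identity and the identity to prove \eqref{eq:kn-e}. So: as a proof of the proposition, fine; as a replacement for the paper's proof in context, it loses the independence that gives the proposition its purpose.
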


\begin{proof}
Identity~\eqref{eq:rmrkSec5} can be verified by induction on $n$. 
In fact we prove a more general identity.
For a real parameter $a\in \mathbb{R}\setminus\{0,1,2\}$, consider the identity
\begin{equation}\label{eq:induction_a1}
\begin{split}
\left(\frac{a}{a-1}\right)^{n-3} + \sum_{k=1}^{n-2}\left(\frac{a}{a-1}\right)^{n-2-k}\left(\frac{a-3-k}{a-2} + \frac{2ak+a-k-2}{a(a-1)(a-2)}\right) \\ 
= (n-2) + \left(1 + \frac{n-(n-1)a}{a(a-2)}\right).
\end{split}
\end{equation}
Following the approach in Section~\ref{sec:refinement}, this general identity can be proved by induction on $n$.  
Substituting $a=n$ in \eqref{eq:induction_a1} recovers the desired identity~\eqref{eq:rmrkSec5} for all $n\geq 3$.  
\end{proof}

Multiplying both sides of identity~\eqref{eq:prop_sec05} by $(n-1)^{n-3}(n-2)$ we obtain an equivalent form.  
For $n\geq 3$, this gives
\begin{equation}\label{eq:identity_final5}
(n-1)^{n-3}(n-2)^{2}
= n^{n-3}(n-2)
+ \sum_{k=1}^{n-2} n^{n-2-k}(n-1)^{k-1}
\left((n-3-k) + \frac{2nk+n-k-2}{n(n-1)}\right).
\end{equation}
Combining \eqref{eq:identity_final5} with \eqref{eq:prop_sec05}, we recover the formula $|\mathcal{U}_{K_n}^{\prime}| = (n-1)^{n-3}(n-2)^{2}$,
which gives the number of uprooted spanning trees of the graph $K_n \setminus \{e_{1,n}\}$ obtained by deleting the edge $\{1,n\}\in E(K_n)$.  
This provides an alternative and more direct proof of the result of Kumar et al.~\cite{CGS}, as an application of the matrix tree theorem, thereby avoiding the elaborate counting arguments in their original approach.

\begin{remark}
In the course of proving Proposition~\ref{prop:identities} and Proposition~\ref{prop:identity_Kn-e}, two general identities, \eqref{eq:induction_a} and \eqref{eq:induction_a1}, were encountered.
For an integer $a> n$, it would be interesting to see whether these identities possess a combinatorial interpretation.

Furthermore, the observations made in this article lead to a natural open question:  
\emph{Does there exist an analogue of the matrix tree theorem for computing the number of uprooted spanning trees of an arbitrary graph?}
\end{remark}


\section*{Acknowledgements}
The first author expresses gratitude for the financial assistance received under the Institute Postdoctoral Fellowship offered by the Indian Institute of Science Education and Research (IISER) Mohali.


%
%
%
%
%

\bibliography{main}

\end{document}